\documentclass[11pt,reqno]{amsart}

\usepackage[margin=1in]{geometry}
\usepackage[T1]{fontenc}
\usepackage{lmodern}
\usepackage[expansion=false]{microtype}
\usepackage{amsmath,amssymb,amsthm,mathtools}
\usepackage{aliascnt}
\usepackage{enumitem}
\usepackage{needspace}
\usepackage{graphicx}
\usepackage{xcolor}
\usepackage{tikz,tikz-cd}
\usetikzlibrary{arrows.meta,calc,positioning}
\usepackage{hyperref}
\hypersetup{hidelinks}
\usepackage[nameinlink,capitalise,noabbrev]{cleveref}

\allowdisplaybreaks
\numberwithin{equation}{section}

\theoremstyle{plain}
\newtheorem{theorem}{Theorem}[section]
\newaliascnt{proposition}{theorem}
\newtheorem{proposition}[proposition]{Proposition}
\aliascntresetthe{proposition}
\newaliascnt{lemma}{theorem}
\newtheorem{lemma}[lemma]{Lemma}
\aliascntresetthe{lemma}
\newaliascnt{corollary}{theorem}
\newtheorem{corollary}[corollary]{Corollary}
\aliascntresetthe{corollary}
\theoremstyle{definition}
\newaliascnt{definition}{theorem}
\newtheorem{definition}[definition]{Definition}
\aliascntresetthe{definition}
\newaliascnt{example}{theorem}
\newtheorem{example}[example]{Example}
\aliascntresetthe{example}
\theoremstyle{remark}
\newaliascnt{remark}{theorem}
\newtheorem{remark}[remark]{Remark}
\aliascntresetthe{remark}

\crefname{theorem}{Theorem}{Theorems}
\crefname{proposition}{Proposition}{Propositions}
\crefname{lemma}{Lemma}{Lemmas}
\crefname{corollary}{Corollary}{Corollaries}
\crefname{definition}{Definition}{Definitions}
\crefname{example}{Example}{Examples}
\crefname{remark}{Remark}{Remarks}

\newcommand{\Nzero}{\mathbb N_0}
\newcommand{\R}{\mathbb R}
\newcommand{\C}{\mathbb C}
\newcommand{\D}{\mathbb D}
\newcommand{\Torus}{\mathbb T}
\newcommand{\Cplus}{\mathbb C_+}
\newcommand{\Ran}{\operatorname{Ran}}
\newcommand{\Ker}{\operatorname{Ker}}
\newcommand{\closspan}{\overline{\operatorname{span}}}
\newcommand{\dd}{\,\mathrm d}
\newcommand{\ip}[2]{\left\langle #1,#2\right\rangle}
\newcommand{\norm}[1]{\left\lVert #1\right\rVert}
\newcommand{\abs}[1]{\left\lvert #1\right\rvert}
\newcommand{\Log}{\operatorname{Log}}

\begin{document}

\title[Mesoscopic redundancy]{Mesoscopic redundancy for dynamical frames
generated by atomic singular model operators}

\author{Ilya Krishtal}
\address{School of Mathematical and Statistical Sciences,
Northern Illinois University, DeKalb, IL 60115, USA}
\email{ikrishtal@niu.edu}
\author{Javad Mashreghi}
\address{D\'epartement de math\'ematiques et de statistique,
Universit\'e Laval, Qu\'ebec, Qu\'ebec G1V 0A6, Canada}
\email{javad.mashreghi@mat.ulaval.ca}
\date{}

\hypersetup{
  pdftitle={Mesoscopic Redundancy for Dynamical Frames Generated by Atomic Singular Model Operators},
  pdfauthor={Ilya Krishtal and Javad Mashreghi},
  pdfsubject={Direct continuous-time characterization of fractional atomic model orbits by Kummer--Bessel transmutation and mesoscopic thickness},
  pdfkeywords={dynamical frame, fractional powers, Kummer function, transmutation operator, Bessel equation, model space, Kummer thickness, mesoscopic thickness}
}

\begin{abstract}
Consider the compressed shift $T=P_{K_\theta}M_z|_{K_\theta}$ on
$K_\theta=H^2\ominus\theta H^2$ associated with the atomic singular inner
function $\theta(z)=\exp(-a(\zeta+z)/(\zeta-z))$, $a>0$ and $|\zeta|=1$.
Let $k_0=P_{K_\theta}1$ and define real powers $T^s$ using a holomorphic
logarithm near the singleton spectrum of $T$. For any temporal
multisequence $\mathsf S=(s_k)_{k\in I}\subset[0,\infty)$ with uniformly
bounded numbers of samples in unit intervals, we prove that
$\{T^{s_k}k_0:k\in I\}$ is a frame if and only if $\mathsf S$ is
\emph{Kummer-thick}: the frequencies $\omega(s_k)=\sqrt{8s_k+4}$, weighted
by $1/\omega(s_k)$, have uniformly positive total mass in every sufficiently
distant interval of some fixed length. Equivalently, the occupied unit
cells fill a fixed positive proportion of every window
$[N-C\sqrt N,N+C\sqrt N]$ for some $C>0$ and all sufficiently large $N$.
The proof combines a Volterra transmutation into Bessel waves with the
Fourier--Bessel Logvinenko--Sereda theorem.
\end{abstract}

\keywords{dynamical frames, model spaces, fractional powers, mesoscopic
thickness, transmutation operators, Fourier--Bessel sampling}
\subjclass[2020]{42C15, 47A45, 30H10, 33C10, 34B24, 42A65}

\maketitle
\enlargethispage{\baselineskip}

\section{Introduction}
  A frame of iterations in a separable Hilbert space $\mathcal H$  is a family
$\{T^nf_j:n\in\Nzero,1\le j\le m\}$ satisfying
\[
 A\norm h^2\le
 \sum_{j=1}^m\sum_{n\ge0}|\ip h{T^nf_j}|^2
 \le B\norm h^2,\qquad h\in\mathcal H,
\]
for some $0<A\le B<\infty$. Here $T\in\mathcal B(\mathcal H)$,
$f_1,\ldots,f_m\in\mathcal H$, and $\Nzero=\{0,1,2,\ldots\}$.
An indexed family is called Bessel if it satisfies the upper inequality.
Such systems are central in
dynamical sampling and evolving-data problems; see
\cite{ACKM26,ACMT17}.  Their structural theory includes vector-valued Hardy
models \cite{CMS23}, optimal finite-generator realizations \cite{ACNP26},
and recent results for multiplication operators and broader classes of
operator orbits \cite{AC26,GGP26}.  Carleson-type dynamical frames have
provided a particularly fertile source of redundancy questions
\cite{CHPS24,KMBDC26}; the block-diagonal theory in \cite{KMBDC26} uses
Jordan structure and natural density to obtain exact redundancy results.  The sectorial theory in \cite{KMMsectorial26} develops compact
localization and fixed-scale Beurling-density criteria for finitely
generated sectorial frames.
The atomic singular model considered here exhibits a different phenomenon:
the correct local scale grows like $\sqrt N$, and a direct Kummer--Bessel
transmutation makes this mesoscopic geometry explicit.

The present work may also be viewed as a temporal counterpart of the
space--time sampling problem for convolution flows studied in
\cite{AGHJKR21}.  There, uniform sub-Nyquist spatial sampling creates blind
spots, and stable recovery imposes quantitative restrictions on spatial gaps;
here, the observation vector is fixed, and we characterize the discrete
observation times for a highly non-normal model flow.

Let $H^2$ denote the Hardy space on the unit disk $\D$.  For an inner
function $\theta$, write
\[
 K_\theta=H^2\ominus\theta H^2,
 \qquad
 S_\theta=P_{K_\theta}M_z|_{K_\theta},
\]
where $P_{K_\theta}$ is the orthogonal projection and $M_z$ is multiplication
by $z$.  For $a>0$, put
\begin{equation}\label{eq:theta-a-intro}
 \theta_a(z)=\exp\!\left(-a\frac{1+z}{1-z}\right),\qquad
 K_a=K_{\theta_a},\qquad
 T_a=S_{\theta_a},\quad k_0=P_{K_a}1.
\end{equation}
The standard model-operator spectrum formula gives
$\sigma(T_a)=\{1\}$; see \cite{Nikolski02,Sarason94}.  Let $\Log$ be the
holomorphic logarithm near $1$ normalized by $\Log 1=0$, and define
\begin{equation}\label{eq:fractional-powers}
 T_a^s=\exp\!\bigl(s\Log T_a\bigr),\qquad s\in\R,
\end{equation}
using the holomorphic functional calculus.  These are the real powers used
throughout.  The reproducing-kernel notation $k_0^{\theta_a}$ is also
standard \cite{GMR16}; we suppress the superscript.
The integer orbit $\{T_a^nk_0\}_{n\ge0}=\{P_{K_a}z^n\}_{n\ge0}$ is
a Parseval frame.  We show that its natural temporal scale is not fixed:
near time $N$, stability is governed by windows of length comparable to
$\sqrt N$.

For a countable set $I$, let
$\mathsf S=(s_k)_{k\in I}\subset[0,\infty)$ be a locally finite temporal
multisequence and assume
\begin{equation}\label{eq:intro-multiplicity}
 M_1(\mathsf S):=\sup_{n\in\Nzero}\#\{k:s_k\in[n,n+1)\}<\infty.
\end{equation}
Define $\omega(s)=\sqrt{8s+4}$.  We call $\mathsf S$ \emph{Kummer-thick}
if there exist $L,\eta,R_0>0$ such that
\begin{equation}\label{eq:intro-main}
 \sum_{\abs{\omega(s_k)-R}\le L}\frac1{\omega(s_k)}\ge\eta,
 \qquad R\ge R_0.
\end{equation}
Our main result, \cref{thm:main}, states that
\[
 \{T_a^{s_k}k_0:k\in I\}\text{ is a frame}
 \quad\Longleftrightarrow\quad
 \mathsf S\text{ is Kummer-thick}.
\]
In temporal coordinates, this is equivalent to requiring a fixed positive
multiple of $\sqrt N$ samples, counted with multiplicity, in every window of
length comparable to $\sqrt N$.
Under \eqref{eq:intro-multiplicity}, it can also be expressed using integer occupied cells.  The criterion is independent of the atomic mass $a>0$.
It is also stable under uniformly bounded displacements in the radial
variable $\omega$, allowing temporal perturbations on the mesoscopic scale
when bounded unit-cell multiplicity is preserved; see \cref{cor:jitter}.

We use the standard notation
\[
 {}_1F_1(\alpha;\beta;z)
 =\sum_{n=0}^\infty
   \frac{(\alpha)_n}{(\beta)_n}\frac{z^n}{n!}
\]
for Kummer's confluent hypergeometric function (with
$\beta\notin\{0,-1,-2,\ldots\}$), where $(\alpha)_0=1$ and
$(\alpha)_n=\alpha(\alpha+1)\cdots(\alpha+n-1)$ is the Pochhammer symbol.
The key is an exact continuous-time reduction.  Put $b=\sqrt a$.  Under the
Cayley--Laplace and square-root changes of variables, $T_a^sk_0$ becomes
\[
 \psi_s(x)=2\sqrt x\,e^{-x^2}{}_1F_1(-s;1;2x^2),
 \qquad 0<x<b,
\]
which is the regular solution of a perturbed Bessel equation at the spectral parameter
$\omega(s)$.  If $J_0$ denotes the Bessel function of the first kind of
 zero order, the transmutation theory for perturbed Bessel equations developed by Kravchenko and Torba \cite{KT21} gives one Volterra operator $\mathcal W_b$, independent of $s$, such that
\(
 \psi_s(x)=\mathcal W_b\bigl(2\sqrt xJ_0(\omega(s)x)\bigr);
\)
see also
Holzleitner \cite{Holzleitner20}.  Since $\mathcal W_b$ is boundedly
invertible on $L^2(0,b)$, the fractional dynamical system is a frame exactly
when the associated Bessel waves form a frame.

The Bessel-wave problem is solved by discretizing the
necessary-and-sufficient Fourier--Bessel Logvinenko--Sereda theorem of
Ghobber and Jaming \cite{GJ13} on equal
$\rho\,\dd\rho$-mass cells.  The integer case has an additional Hardy-coordinate structure, recorded separately.

The remainder of the paper is structured as follows.
Section~2 recalls the exact integer geometry.  Section~3 obtains the
continuous Kummer realization, Section~4 gives the Bessel transmutation,
Section~5 compares the equivalent mesoscopic conditions, and Section~6
proves the fractional sampling theorem.  Integer refinements and other
frame generators are treated in Sections~7--8.

\section{Atomic model spaces and integer erasure geometry}
Integer times retain a coordinate structure that is absent for general
fractional samples.  We first isolate this additional Hardy-space geometry,
including the exact angle and coefficient-interpolation criteria for retained
subsystems of the canonical Parseval orbit.

For an inner function $\theta$, let
\[
 K_\theta=H^2\ominus\theta H^2,
 \quad S_\theta=P_{K_\theta}M_z|_{K_\theta}, \quad  k_0=P_{K_\theta}1.
 \]
We then have $S_\theta^nk_0=P_{K_\theta}z^n$.
The orthogonal projection of the monomial basis gives
\begin{equation}\label{eq:canonical-parseval}
 \sum_{n\ge0}\abs{\ip f{P_{K_\theta}z^n}}^2=\norm f^2,
 \qquad f\in K_\theta.
\end{equation}

For $E\subset\Nzero$, let $H_E^2=\closspan\{z^n:n\in E\}$ and denote by
$Q_E$ the coefficient projection onto $H_E^2$.  If $\Gamma$ is retained, write
$\Lambda=\Nzero\setminus\Gamma$.
Our first proposition records the exact integer angle and the interpolation criterion.

\begin{proposition}
\label{prop:exact-angle}
For a non-constant inner function $\theta$ and $\Gamma\subset\Nzero$, the
following are equivalent:
\begin{enumerate}[label=\textup{(\roman*)}]
\item $\{P_{K_\theta}z^n:n\in\Gamma\}$ is a frame for $K_\theta$;
\item $\norm{Q_\Lambda P_{K_\theta}}<1$;
\item $P_{\theta H^2}$ is bounded below on $H_\Lambda^2$;
\item $Q_\Lambda:\theta H^2\to H_\Lambda^2$ is onto.
\end{enumerate}
The optimal lower frame bound is
\begin{equation}\label{eq:optimal-angle}
 A_\Gamma=1-\norm{Q_\Lambda P_{K_\theta}}^2.
\end{equation}
\end{proposition}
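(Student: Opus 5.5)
The plan is to compute the frame operator of the retained subsystem exactly and then translate the resulting norm condition into the two Hardy-space statements. For $f\in K_\theta$ we have $\ip f{P_{K_\theta}z^n}=\ip f{z^n}=\hat f(n)$. Hence
\[
 \sum_{n\in\Gamma}\abs{\ip f{P_{K_\theta}z^n}}^2=\norm{Q_\Gamma f}^2=\norm f^2-\norm{Q_\Lambda f}^2 .
\]
The upper bound holds with $B=1$. The best lower constant is
\[
 A_\Gamma=\inf_{\norm f=1,\,f\in K_\theta}\bigl(1-\norm{Q_\Lambda f}^2\bigr)=1-\norm{Q_\Lambda|_{K_\theta}}^2=1-\norm{Q_\Lambda P_{K_\theta}}^2 .
\]
This gives both (i)$\Leftrightarrow$(ii) and the formula \eqref{eq:optimal-angle}. (If $\Gamma$ is empty, both sides fail, consistently, since $K_\theta\neq0$ for non-constant $\theta$.)

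Next, (ii)$\Leftrightarrow$(iii) is a duality step. We have $\norm{Q_\Lambda P_{K_\theta}}=\norm{(Q_\Lambda P_{K_\theta})^*}=\norm{P_{K_\theta}Q_\Lambda}=\norm{P_{K_\theta}|_{H^2_\Lambda}}$. For $g\in H^2_\Lambda$, the orthogonal decomposition $H^2=K_\theta\oplus\theta H^2$ gives
\[
 \norm{P_{\theta H^2}g}^2=\norm g^2-\norm{P_{K_\theta}g}^2 .
\]
Therefore $\norm{P_{K_\theta}|_{H^2_\Lambda}}<1$ if and only if $\norm{P_{\theta H^2}g}\ge c\norm g$ on $H^2_\Lambda$ with $c>0$. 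The optimal constants satisfy $c^2=1-\norm{Q_\Lambda P_{K_\theta}}^2=A_\Gamma$.

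Finally, (iii)$\Leftrightarrow$(iv) uses the closed range theorem. Regard $R=Q_\Lambda|_{\theta H^2}:\theta H^2\to H^2_\Lambda$. Its adjoint is $R^*=P_{\theta H^2}|_{H^2_\Lambda}:H^2_\Lambda\to\theta H^2$, since $\ip{Q_\Lambda u}g=\ip ug=\ip u{P_{\theta H^2}g}$ for $u\in\theta H^2$ and $g\in H^2_\Lambda$. A bounded operator between Hilbert spaces is onto exactly when its adjoint is bounded below. Hence $R$ is surjective if and only if $R^*$ is bounded below, which is (iii).

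The only point needing care is this surjectivity/adjoint step, namely identifying the adjoint correctly between the two subspaces. The remaining steps are the Parseval identity \eqref{eq:canonical-parseval} restricted to coefficients in $\Gamma$, plus the norm identity $\norm{X}=\norm{X^*}$. I expect no real obstacle beyond keeping track of which subspace each operator acts on.
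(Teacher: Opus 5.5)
Your proof is correct and follows the paper's argument: the Parseval identity restricted to $\Gamma$ for (i)$\Leftrightarrow$(ii) and the optimal bound, then $\|P_{\theta H^2}g\|^2=\|g\|^2-\|P_{K_\theta}g\|^2$ together with $\|Q_\Lambda P_{K_\theta}\|=\|P_{K_\theta}|_{H^2_\Lambda}\|$ for (ii)$\Leftrightarrow$(iii), and ``onto iff the adjoint is bounded below'' for (iii)$\Leftrightarrow$(iv). The only cosmetic difference is in the last step: the paper uses $Q_\Lambda M_\theta$ on all of $H^2$, with adjoint $M_\theta^*|_{H^2_\Lambda}$ and $\|M_\theta^*h\|=\|P_{\theta H^2}h\|$, whereas you restrict $Q_\Lambda$ to $\theta H^2$ directly; the two operators differ only by the unitary $M_\theta:H^2\to\theta H^2$, so your version is slightly more direct.
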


\begin{proof}
For $f\in K_\theta$, \eqref{eq:canonical-parseval} gives
\[
 \sum_{n\in\Gamma}\abs{\ip f{P_{K_\theta}z^n}}^2
 =\norm f^2-\norm{Q_\Lambda f}^2.
\]
This proves the equivalence of (i) and (ii) as well as
\eqref{eq:optimal-angle}.  Let $M_\theta$ denote multiplication by $\theta$
in $H^2$.  The operator
$R_\Lambda=Q_\Lambda M_\theta:H^2\to H_\Lambda^2$ has adjoint
$M_\theta^*|_{H_\Lambda^2}$ and
$\norm{M_\theta^*h}=\norm{P_{\theta H^2}h}$. The surjectivity of an operator is
equivalent to its adjoint being bounded below, proving the equivalence of
(iii) and (iv).  Finally,
\[
 \norm{P_{\theta H^2}h}^2
 =\norm h^2-\norm{P_{K_\theta}h}^2,
 \qquad h\in H_\Lambda^2,
\]
and
\[
 (Q_\Lambda P_{K_\theta})^*=P_{K_\theta}Q_\Lambda,
 \qquad
 \norm{P_{K_\theta}Q_\Lambda}
 =\norm{P_{K_\theta}|_{H_\Lambda^2}}.
\]
Hence
$P_{\theta H^2}$ is bounded below on $H_\Lambda^2$ if and only if
$\norm{Q_\Lambda P_{K_\theta}}<1$, proving the equivalence of (ii) and (iii).
\end{proof}

\begin{remark}\label{prop:finite-erasure}
If $S_\theta$ is invertible, every finite subset of the canonical orbit may
be erased while preserving the frame property.
Indeed, every tail is an invertible image
\[
 \{P_{K_\theta}z^{N+n}:n\ge0\}=S_\theta^N
 \{P_{K_\theta}z^n:n\ge0\}
\]
of the full frame.  A finitely erased subsystem contains a full tail and
remains Bessel, hence is a frame.  A singular inner function has no zeros in
$\D$, so the standard model-operator spectrum formula gives
$0\notin\sigma(S_\theta)$; see \cite{Nikolski02,Sarason94}.
\end{remark}

\section{Continuous-time Kummer realization}

We now pass from the discrete Hardy-space orbit to a continuous-time
realization. The Cayley--Laplace model identifies every fractional orbit
vector with a continuous-degree Kummer function and reveals the natural
spectral parameter $\omega(s)=\sqrt{8s+4}$.

It suffices to treat the atom at $1$; other boundary points follow by
rotation. Let $\Cplus=\{z:\Re z>0\}$ and normalize its Hardy-space norm by
\[
 \norm F_{H^2(\Cplus)}^2
 =\sup_{\sigma>0}\frac1{2\pi}
   \int_{\R}|F(\sigma+iy)|^2\,\dd y.
\]
With the usual coefficient norm on $H^2(\D)$, the Cayley map
\begin{equation}\label{eq:cayley}
 (\mathcal C f)(z)=\frac{\sqrt2}{z+1}f\!\left(\frac{z-1}{z+1}\right)
\end{equation}
is unitary from $H^2(\D)$ to $H^2(\Cplus)$, while the Laplace transform
\begin{equation}\label{eq:laplace}
 (\mathcal Lg)(z)=\int_0^\infty e^{-zt}g(t)\,\dd t
\end{equation}
is unitary from $L^2(0,\infty)$ onto $H^2(\Cplus)$;
see \cite{GMR23,Widder46}.  Multiplication by $e^{-az}$
corresponds to right translation by $a$, and hence
\begin{equation}\label{eq:Ka-L2}
 K_a\simeq L^2(0,a).
\end{equation}
More precisely, we use the unitary identification
\begin{equation}\label{eq:U-def}
 \mathcal U=\mathcal L^{-1}\mathcal C|_{K_a}:K_a\longrightarrow L^2(0,a).
\end{equation}

In $L^2(0,a)$ define
\begin{equation}\label{eq:volterra-a}
 (\mathcal V_ag)(t)=\int_0^t e^{-(t-u)}g(u)\,\dd u.
\end{equation}
Multiplication by $(z+1)^{-1}$ is the Laplace transform of convolution with
$e^{-t}$, so the atomic model operator $T_a$ corresponds to
\begin{equation}\label{eq:B-a}
 B_a=I-2\mathcal V_a.
\end{equation}
The vector $k_0$ corresponds to $e_0(t)=\sqrt2e^{-t}$.

Recall that ${}_1F_1(\alpha;\beta;z)$ denotes Kummer's confluent
hypergeometric function, as defined in the introduction.
The following proposition, illustrated by \cref{fig:kummer-commutative},
provides an $L^2(0,a)$ realization of the fractional atomic orbit.

\begin{proposition}
\label{prop:kummer-orbit}
For every $s\ge0$, the vector $T_a^sk_0$ corresponds under
\eqref{eq:Ka-L2} to
\begin{equation}\label{eq:ell-s}
 \ell_s(t)=\sqrt2e^{-t}\,{}_1F_1(-s;1;2t),
 \qquad 0<t<a.
\end{equation}
After the unitary substitution
\begin{equation}\label{eq:V-square}
 (Vg)(x)=\sqrt{2x}\,g(x^2),
 \qquad V:L^2(0,a)\to L^2(0,b),\quad b=\sqrt a,
\end{equation}
one obtains
\begin{equation}\label{eq:psi-s}
 \psi_s(x)=2\sqrt x\,e^{-x^2}{}_1F_1(-s;1;2x^2).
\end{equation}
This is the regular solution at $0$ of
\begin{equation}\label{eq:continuous-oscillator}
 -\psi_s''(x)+\left(4x^2-\frac1{4x^2}\right)\psi_s(x)
 =\omega(s)^2\psi_s(x),
 \qquad \omega(s)=\sqrt{8s+4},
\end{equation}
normalized by $\psi_s(x)\sim2\sqrt x$ as $x\downarrow0$.
\end{proposition}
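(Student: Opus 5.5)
We work entirely in the model $L^2(0,a)$, where $T_a$ becomes $B_a=I-2\mathcal V_a$ and $k_0$ becomes $e_0(t)=\sqrt2e^{-t}$. The first step is to pass the holomorphic functional calculus through the unitary $\mathcal U$. Since $\sigma(B_a)=\{1\}$ and $\mathcal V_a$ is quasinilpotent, $\Log B_a=\Log(I-2\mathcal V_a)$ is given by a norm-convergent power series in $\mathcal V_a$. Hence $B_a^s=\sum_{n\ge0}\binom sn(-2)^n\mathcal V_a^n$, and this series converges in operator norm because $\norm{\mathcal V_a^n}\le a^n/n!$.

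Applying this to $e_0$ uses the identity
\[
 \mathcal V_a^n e_0(t)=\sqrt2e^{-t}\frac{t^n}{n!},
\]
which follows because convolution with $e^{-t}$ of $e^{-u}u^{n-1}/(n-1)!$ gives $e^{-t}t^n/n!$. Therefore
\[
 B_a^se_0(t)=\sqrt2e^{-t}\sum_{n\ge0}\binom sn\frac{(-2t)^n}{n!}.
\]
Since $\binom sn(-1)^n=(-s)_n/n!$ and $(1)_n=n!$, this sum is exactly $\sqrt2e^{-t}{}_1F_1(-s;1;2t)=\ell_s(t)$, which is \eqref{eq:ell-s}. The only point needing care is that the functional-calculus $\exp(s\Log B_a)$ coincides with the binomial series. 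Both are holomorphic functions of $B_a$ near the spectrum $\{1\}$, and they agree as scalar functions on a disc around $1$, so they coincide as operators. Unitary equivalence transports the functional calculus, so $\mathcal U T_a^s k_0=\ell_s$.

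Next, $V$ is unitary, since $\int_0^b 2x|g(x^2)|^2\dd x=\int_0^a|g|^2$, and substituting $t=x^2$ gives \eqref{eq:psi-s} directly. For the differential equation, write $\psi_s(x)=2\sqrt x\,e^{-x^2}w(2x^2)$ with $w={}_1F_1(-s;1;\cdot)$. The function $w$ satisfies Kummer's equation $zw''+(1-z)w'+sw=0$.

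The main obstacle is the calculation showing that $\psi_s$ satisfies \eqref{eq:continuous-oscillator}. We carry it out in two stages. In the first stage, set $u(x)=w(2x^2)$; the chain rule with $z=2x^2$ converts Kummer's equation into
\[
 u''+\Bigl(\frac1x-4x\Bigr)u'+8su=0.
\]
In the second stage we remove the first-order term by the factor $\sqrt x\,e^{-x^2}=\exp\bigl(-\tfrac12\int(1/x-4x)\,dx\bigr)$. This is the Liouville normal form, and it yields $-\psi''+Q\psi=8s\psi$ with
\[
 Q=\tfrac14p^2+\tfrac12p'
\]
for $p=1/x-4x$. Computing,
\[
 Q=\frac1{4x^2}-2+4x^2-\frac1{2x^2}-2=4x^2-\frac1{4x^2}-4,
\]
so $-\psi''+(4x^2-\tfrac1{4x^2})\psi=(8s+4)\psi=\omega(s)^2\psi$, as claimed.

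Finally, $w(0)=1$ gives $\psi_s\sim2\sqrt x$ as $x\downarrow0$. The indicial roots of the equation at $0$ are both $1/2$, with the second solution behaving like $\sqrt x\log x$. Hence the $\sqrt x$-type solution is the regular one, and it is unique up to normalization, so $\psi_s$ is the normalized regular solution.
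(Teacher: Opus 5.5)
Your proposal is correct and follows the paper's proof essentially step by step. The shared steps are the norm-convergent binomial series in the quasinilpotent $\mathcal V_a$, the identity $\mathcal V_a^n e^{-t}=t^ne^{-t}/n!$, the Pochhammer rewriting, and Kummer's equation under $z=2x^2$; your Liouville normal-form computation and indicial-root argument just spell out what the paper compresses into one line. One cosmetic slip: $\exp\bigl(-\tfrac12\int(1/x-4x)\dd x\bigr)=x^{-1/2}e^{x^2}$, so $\sqrt x\,e^{-x^2}$ is $\exp\bigl(+\tfrac12\int p\bigr)$, though your substitution $\psi=2\sqrt x\,e^{-x^2}u$ and the resulting $Q=\tfrac14p^2+\tfrac12p'$ are correct as used.
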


\begin{figure}[t]
\centering
\begin{minipage}{\linewidth}
\centering\small
\begin{tikzcd}[column sep=large,row sep=large]
K_a
  \arrow[r,"T_a^s"]
  \arrow[d,"\mathcal U"']
&
K_a
  \arrow[d,"\mathcal U"]
\\
L^2(0,a)
  \arrow[r,"B_a^s"]
  \arrow[d,"V"']
&
L^2(0,a)
  \arrow[d,"V"]
\\
L^2(0,b)
  \arrow[r,"\widetilde B_a^s"']
&
L^2(0,b)
\end{tikzcd}

\vspace{0.5em}

\[
\begin{aligned}
(\mathcal U k_0)(t)
   &= e_0(t)=\sqrt2\,e^{-t},\\
(\mathcal U T_a^s k_0)(t)
   &=(B_a^s e_0)(t)
     =\ell_s(t)
     =\sqrt2\,e^{-t}\,
       {}_1F_1(-s;1;2t),\\
(V\ell_s)(x)
   &=\psi_s(x)
     =2\sqrt{x}\,e^{-x^2}
       {}_1F_1(-s;1;2x^2),
\qquad b=\sqrt a .
\end{aligned}
\]
\end{minipage}
\caption{The continuous Kummer realization of the fractional atomic
orbit.  Here $B_a=\mathcal U T_a\mathcal U^{-1}=I-2\mathcal V_a$ and
$\widetilde B_a^s=VB_a^sV^{-1}$; in particular,
$\widetilde B_a^s(Ve_0)=\psi_s$}
\label{fig:kummer-commutative}
\end{figure}

\begin{proof}
The holomorphic functional calculus is preserved by $\mathcal U$, so
$T_a^s$ corresponds to $B_a^s$.  The operator $\mathcal V_a$ is
quasinilpotent, and the logarithm fixed in \eqref{eq:fractional-powers}
therefore gives $B_a^s=(I-2\mathcal V_a)^s$.  The Taylor series of
$(1-2z)^s$ at $0$ converges in operator norm at $\mathcal V_a$: its scalar
coefficients have exponential growth rate at most $2$, whereas
$\norm{\mathcal V_a^n}^{1/n}\to0$.  Since
\[
 \mathcal V_a^n(e^{-t})=\frac{t^n}{n!}e^{-t},
\]
we obtain
\begin{align*}
 B_a^se_0
 &=\sqrt2e^{-t}\sum_{n=0}^\infty
    \binom{s}{n}\frac{(-2t)^n}{n!}\\
 &=\sqrt2e^{-t}\sum_{n=0}^\infty
    \frac{(-s)_n}{(n!)^2}(2t)^n
 =\ell_s(t).
\end{align*}
This proves \eqref{eq:ell-s}; the series is locally uniform in $(s,t)$.
The formula \eqref{eq:psi-s} follows from \eqref{eq:V-square}.

Kummer's equation, in the normalization of \cite[\S13.2]{DLMF}, is
\[
 zM''+(1-z)M'+sM=0,
 \qquad M(z)={}_1F_1(-s;1;z).
\]
Substituting $z=2x^2$ and multiplying by $2\sqrt x\,e^{-x^2}$ gives \eqref{eq:continuous-oscillator}.  The power series at the
origin gives the stated regular normalization.
\end{proof}

For integer $s=n$, ${}_1F_1(-n;1;2t)=L_n(2t)$, where $L_n$ is the
$n$th Laguerre polynomial. Thus, \eqref{eq:ell-s} reduces to the classical
normalized Laguerre function.

\section{Volterra transmutation to Bessel waves}

The Kummer realization produces regular solutions of a perturbed Bessel
equation.  To obtain a tractable sampling problem, we remove the oscillator
potential by a single Volterra transmutation that is independent of the
temporal parameter.

For $\omega>0$, set
\begin{equation}\label{eq:u-omega}
 u_\omega(x)=2\sqrt x\,J_0(\omega x),
 \qquad 0<x<b,
\end{equation}
where $J_0$ is the Bessel function of the first kind of
 zero order.
It is the regular solution of
\begin{equation}\label{eq:unperturbed}
 -u''-\frac1{4x^2}u=\omega^2u,
 \qquad u(x)\sim2\sqrt x.
\end{equation}

The following key transmutation theorem relates the function families $\{\psi_s\}$  and $\{u_{\omega(s)}\}$, $\omega(s)=\sqrt{8s+4}$.

\begin{theorem}
\label{thm:transmutation}
There is a continuous kernel $K_b(x,t)$ on
$0\le t\le x\le b$ such that
\begin{equation}\label{eq:W-b}
 (\mathcal W_bf)(x)=f(x)+\int_0^xK_b(x,t)f(t)\,\dd t
\end{equation}
defines a boundedly invertible operator on $L^2(0,b)$ and
\begin{equation}\label{eq:transmutation-identity}
 \psi_s=\mathcal W_bu_{\omega(s)},
 \qquad s\ge0.
\end{equation}
Consequently, for every countable temporal multisequence $\mathsf S=(s_k)_{k\in I}$,
\begin{equation}\label{eq:frame-transmutation-equivalence}
 \{T_a^{s_k}k_0\}_{k\in I}\text{ is a frame for }K_a
 \quad\Longleftrightarrow\quad
 \{u_{\omega(s_k)}\}_{k\in I}\text{ is a frame for }L^2(0,b).
\end{equation}
\end{theorem}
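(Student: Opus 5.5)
We will build the transformation operator in the Liouville (Gelfand--Levitan) form adapted to the singular Bessel term. Both $\psi_s$ and $u_{\omega(s)}$ solve equations of the form $-y''+(\nu^2-\tfrac14)x^{-2}y+q(x)y=\omega^2y$ with $\nu=0$. The perturbation is $q(x)=4x^2$, which is smooth and bounded on $[0,b]$. The two solutions also share the regular normalization $y\sim2\sqrt x$ at $0$, by \cref{prop:kummer-orbit} and \eqref{eq:unperturbed}.

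The first step is to invoke the transmutation theorem for perturbed Bessel operators (Kravchenko--Torba \cite{KT21}, see also \cite{Holzleitner20}). For $q\in C[0,b]$, or even $x q(x)$ integrable, it supplies a kernel $K_b(x,t)$, continuous on $0\le t\le x\le b$, such that every regular solution of the perturbed equation with spectral parameter $\omega$ equals $u_\omega+\int_0^xK_b(x,t)u_\omega(t)\,\dd t$.

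To justify applying it here, I would sketch the mechanism. The kernel solves a Goursat problem
\[
\Bigl(\partial_x^2-\tfrac{-1/4}{x^2}\Bigr)K-\Bigl(\partial_t^2-\tfrac{-1/4}{t^2}\Bigr)K=q(x)K,
\qquad K(x,x)=\tfrac12\int_0^xq,
\]
and is obtained by successive approximations, using the Riemann function of the Bessel--Euler--Poisson--Darboux operator. The iterates converge uniformly on the triangle because $q$ is bounded there.

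Uniqueness of the regular solution gives \eqref{eq:transmutation-identity}. Both sides are regular solutions of \eqref{eq:continuous-oscillator} at the same $\omega=\omega(s)$, and both behave like $2\sqrt x$ as $x\downarrow0$, since the integral term is $O(x^{3/2})$. The regular solution is unique because the second Frobenius solution grows like $\sqrt x\log x$. The kernel does not depend on $\omega$, hence not on $s$.

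Next, $\mathcal W_b=I+\mathcal K$, where $\mathcal K$ is a Volterra integral operator with continuous kernel on a compact triangle. Such an operator is Hilbert--Schmidt and quasinilpotent, because the iterated kernels satisfy $|K^{(n)}|\le \|K\|_\infty^n x^{n-1}/(n-1)!$. The Neumann series $\sum(-\mathcal K)^n$ therefore converges in norm, and $\mathcal W_b$ is boundedly invertible on $L^2(0,b)$.

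For \eqref{eq:frame-transmutation-equivalence}, note that $\mathcal U$ and $V$ are unitary, so $\{T_a^{s_k}k_0\}$ is a frame for $K_a$ exactly when $\{\psi_{s_k}\}$ is a frame for $L^2(0,b)$. Since $\psi_{s_k}=\mathcal W_bu_{\omega(s_k)}$, for any $h$ we have
\[
\sum_k|\ip h{\psi_{s_k}}|^2=\sum_k|\ip{\mathcal W_b^*h}{u_{\omega(s_k)}}|^2 .
\]
Moreover, $\norm{\mathcal W_b^*h}$ is comparable to $\norm h$ because $\mathcal W_b^*$ is bounded and invertible. So both frame inequalities transfer in each direction, with bounds changed by factors $\norm{\mathcal W_b}^{\pm2}$ and $\norm{\mathcal W_b^{-1}}^{\pm2}$.

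The main obstacle is the first step: a continuous kernel that genuinely applies to the singular $-\tfrac1{4x^2}$ term, i.e.\ order $\nu=0$, the borderline case. I would cite \cite{KT21} directly, where order $\nu\ge -1/2$ with $q$ continuous is covered. If needed, I would verify convergence of the successive approximations with the known bounds on the Riemann function for $\nu=0$, which are locally bounded by a constant on the triangle.
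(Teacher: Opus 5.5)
Your proposal is correct and follows the paper's proof. Both cite the Kravchenko--Torba transmutation at the endpoint $\ell=-\tfrac12$ (Bessel order $0$) with $q(x)=4x^2$, identify $\psi_s=\mathcal W_bu_{\omega(s)}$ by uniqueness of the regular solution normalized by $2\sqrt x$, invert $\mathcal W_b$ by the Neumann series for its Volterra part, and transfer frame bounds through the unitaries $\mathcal U,V$ and the invertible $\mathcal W_b$; your explicit $\mathcal W_b^*$ computation just spells out the paper's one-line remark. The only blemish is your side remark that ``$xq(x)$ integrable'' would suffice, which is not the right hypothesis at the borderline order $\nu=0$ (the paper records a logarithmic integrability condition there), but it plays no role for $q(x)=4x^2$.
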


\begin{proof}
We use the Kravchenko--Torba transmutation theorem for perturbed Bessel
equations \cite[Section~2, equation~(2.3)]{KT21}, with
$\ell=-\tfrac12$ and $q(x)=4x^2$. The cited result includes this endpoint,
and $q$ is continuous on $[0,b]$. In particular, the logarithmic
integrability condition
$\int_0^b(1+|\log(x/b)|)|q(x)|\,\dd x<\infty$ is satisfied.
We deduce that the
regular solution of
\[
 -y''+\left(-\frac1{4x^2}+4x^2\right)y=\omega^2y
\]
is obtained from the regular solution of the unperturbed equation \eqref{eq:unperturbed} by a
Volterra operator with a continuous kernel, independent of $\omega$.  Their
reference solution is
\[
 b_{-1/2}(\omega x)=\sqrt{\frac{\pi\omega x}{2}}J_0(\omega x).
\]
Multiplying both regular solutions by $2\sqrt{2/(\pi\omega)}$ gives the
normalization $2\sqrt x$ at the origin.  Uniqueness of the regular solution
and \cref{prop:kummer-orbit} then give
\eqref{eq:transmutation-identity}.  Related transformation-operator results
for spherical Schr\"odinger equations appear in \cite{Holzleitner20}.

The integral part of $\mathcal W_b$ is Volterra with a continuous kernel.  If
$M=\max\limits_{0\le t\le x\le b}|K_b(x,t)|$, its $n$th power has kernel bounded by
$\frac{M^n(x-t)^{n-1}}{(n-1)!}$.  Hence, the resolvent Neumann series converges in
the operator norm and $\mathcal W_b$ is boundedly invertible on $L^2(0,b)$.
An invertible operator applied to all vectors in a family preserves the
frame property, proving \eqref{eq:frame-transmutation-equivalence}.
\end{proof}

\section{Kummer thickness and mesoscopic geometry}

The transmutation reduces the frame problem to sampling Bessel waves at 
frequencies $\omega(s_k)$.  We next identify the corresponding radial
thickness condition and translate it into a variable-scale temporal
geometry of windows of length comparable to $\sqrt N$.

\begin{definition}
\label{def:temporal-set}
Let $\mathsf S=(s_k)_{k\in I}\subset[0,\infty)$ be a locally finite temporal
multisequence with repetitions counted.  Define
\[
 M_1(\mathsf S)=\sup_{n\in\Nzero}\#\{k:s_k\in[n,n+1)\},
 \qquad
 \Gamma_{\mathsf S}=\{\lfloor s_k\rfloor:k\in I\}.
\]
We assume throughout that $M_1(\mathsf S)<\infty$.  Put
$\omega_k=\omega(s_k)=\sqrt{8s_k+4}$.  For $\xi\ge0$, let $\delta_\xi$
denote the unit point mass at $\xi$, and define
\begin{equation}\label{eq:nu-S}
 \nu_{\mathsf S}=\sum_{k\in I}\frac1{\omega_k}\,\delta_{\omega_k}.
\end{equation}
The multisequence $\mathsf S$ is called \emph{Kummer-thick} if there are
$L,\eta,R_0>0$ such that
\begin{equation}\label{eq:weighted-thickness-direct}
 \nu_{\mathsf S}([R-L,R+L])\ge\eta,
 \qquad R\ge R_0.
\end{equation}
\end{definition}

The weights in \eqref{eq:nu-S} reflect the sizes of the Bessel waves.
If $J_1$ denotes the Bessel function of the first kind of order one, then
\begin{equation}\label{eq:bessel-wave-norm}
 \norm{u_\omega}_{L^2(0,b)}^2
 =2b^2\bigl(J_0(\omega b)^2+J_1(\omega b)^2\bigr)
 \sim\frac{4b}{\pi\omega},\qquad\omega\to\infty.
\end{equation}
The identity follows by integrating $4xJ_0(\omega x)^2$, and the
asymptotic follows from the large-argument expansions of $J_0$ and $J_1$;
see \cite[\S\S10.22(i), 10.17(i)]{DLMF}.
Thus, $1/\omega$ agrees, up to a constant, with the asymptotic squared
norm of the corresponding wave.

A set $\Gamma\subset\Nzero$ is called $\sqrt N$-thick if there are
$C,c,N_0>0$ such that, for every real $N\ge N_0$,
\begin{equation}\label{eq:sqrt-thick}
 \#\bigl(\Gamma\cap[N-C\sqrt N,N+C\sqrt N]\bigr)
 \ge c\sqrt N,
 \qquad N\ge N_0.
\end{equation}

Figure~\ref{fig:gamma-sets} contrasts the two relevant local counting behaviors.  If \(\Gamma_{\mathrm{per}}\) is a nonempty periodic subset of \(\mathbb N_0\), then it has some positive density \(\delta\), and
\[
   \#\bigl(
      \Gamma_{\mathrm{per}}
      \cap[N-C\sqrt N,N+C\sqrt N]
   \bigr)
   =
   2C\delta\sqrt N+O(1).
\]
Hence \(\Gamma_{\mathrm{per}}\) is \(\sqrt N\)-thick. By contrast, for
\(\Gamma_{\mathrm{sq}}=\{k^2:k\in\mathbb N_0\}\), the spacing between consecutive points near \(N\) is
\[
   (k+1)^2-k^2=2k+1\asymp\sqrt N.
\]
Consequently, a window of length $O(\sqrt N)$ contains only $O(1)$ squares,
rather than the required positive multiple of $\sqrt N$.  Thus
$\Gamma_{\mathrm{sq}}$ is not $\sqrt N$-thick.

\begin{figure}[t]
\centering
\resizebox{0.95\linewidth}{!}{%
\begin{tikzpicture}[x=0.095cm,y=1cm,>=Latex]
  \def\leftwin{48}\def\rightwin{80}
  \fill[black!8] (\leftwin,1.25) rectangle (\rightwin,2.05);
  \fill[black!8] (\leftwin,-0.05) rectangle (\rightwin,0.75);
  \draw[->] (-1,1.65) -- (103,1.65) node[right] {$n$};
  \draw[->] (-1,0.35) -- (103,0.35) node[right] {$n$};
  \foreach \n in {0,...,25}{\fill ({4*\n},1.65) circle (1.55pt);
    \pgfmathtruncatemacro{\m}{4*\n+1}
    \ifnum\m<101\relax\fill (\m,1.65) circle (1.55pt);\fi}
  \foreach \k in {0,...,10}{\fill ({\k*\k},0.35) circle (1.55pt);}
  \foreach \x in {0,16,32,48,64,80,96}{
    \draw (\x,1.59)--(\x,1.71) node[above=3pt,scale=.75] {\x};
    \draw (\x,0.29)--(\x,0.41) node[below=3pt,scale=.75] {\x};}
  \node[anchor=east] at (-4,1.65) {$\Gamma_{\rm per}$};
  \node[anchor=east] at (-4,0.35) {$\Gamma_{\rm sq}$};
  \draw[<->,thick] (\leftwin,1.25)--(\rightwin,1.25)
       node[midway,fill=white,inner sep=1.5pt] {$4\sqrt{64}$};
\end{tikzpicture}%
}
\caption{Periodic occupancy satisfies the mesoscopic condition, whereas the
quadratic set $\{k^2\}$ does not}
\label{fig:gamma-sets}
\end{figure}

The following lemma records equivalence between various temporal thickness formulations.

\begin{lemma}
\label{lem:thickness-equivalence}
For a temporal multisequence of bounded unit-cell multiplicity, the
following are equivalent:
\begin{enumerate}[label=\textup{(\roman*)}]
\item $\mathsf S$ is Kummer-thick, i.e.,
\eqref{eq:weighted-thickness-direct} holds;
\item there are $C,c,N_0>0$ such that
\begin{equation}\label{eq:direct-temporal-count}
 \#\{k:s_k\in[N-C\sqrt N,N+C\sqrt N]\}\ge c\sqrt N,
 \qquad N\ge N_0;
\end{equation}
\item the occupied-cell set $\Gamma_{\mathsf S}$ is $\sqrt N$-thick.
\end{enumerate}
\end{lemma}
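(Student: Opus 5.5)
The plan is to prove the equivalences (i)$\Leftrightarrow$(ii) and (ii)$\Leftrightarrow$(iii) separately. The first is a change of variables $s\mapsto\omega(s)$. The second uses only the bounded multiplicity $M_1=M_1(\mathsf S)$.

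For (i)$\Leftrightarrow$(ii), note that $\omega(s)=\sqrt{8s+4}$ is increasing and $\omega'(s)=4/\omega(s)$. Consider the window $|\omega-R|\le L$ with $R=\omega(N)$ large. It corresponds to a temporal interval $[s_-,s_+]$ with $s_\pm=((R\pm L)^2-4)/8$, so $s_\pm=N\pm\tfrac{LR}{4}+O(1)$. Since $R\asymp\sqrt{8N}$, this has the form $[N-C_\pm\sqrt N,N+C_\pm\sqrt N]$ up to bounded errors. On this window the weights $1/\omega_k$ are comparable to $1/R\asymp 1/\sqrt N$, uniformly, because $(R\pm L)/R\to1$. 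Hence
\[
 \nu_{\mathsf S}([R-L,R+L])\asymp \frac{1}{\sqrt N}\,\#\{k:|\omega_k-R|\le L\}.
\]
Assume (i). For $N\ge N_0$ set $R=\omega(N)$. The temporal window contains $[s_-,s_+]$ once $C$ is slightly larger than $L\sqrt8/4$. We obtain $\#\ge \eta R/(1+o(1))\ge c\sqrt N$.

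Conversely, assume (ii) with constants $C,c$. Given $R\ge R_0$, choose $N$ with $\omega(N)=R$ and take $L$ large enough, $L\approx 4C/\sqrt8+1$, that the $\omega$-window contains the image of $[N-C\sqrt N,N+C\sqrt N]$. The only slightly delicate point is the lower endpoint: for large $N$ one has $N-C\sqrt N\ge0$ and $\omega(N-C\sqrt N)=R-\sqrt8\,C/2+o(1)$, which fixes the choice of $L$. Multiplying the count by $1/(R+L)$ gives $\eta\approx c\sqrt N/R\asymp c/\sqrt8$. The bounded boundary discrepancies, of size $O(1)$ in $s$, are absorbed by enlarging $C$ or $L$ by a constant.

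For (ii)$\Leftrightarrow$(iii), compare samples with occupied cells. Every $s_k$ in $[N-C\sqrt N,N+C\sqrt N]$ has $\lfloor s_k\rfloor$ in $[N-C\sqrt N-1,N+C\sqrt N]$. Each integer cell carries at most $M_1$ samples, so
\[
 \#\{k:s_k\in[N-C\sqrt N,N+C\sqrt N]\}\le M_1\,\#\bigl(\Gamma_{\mathsf S}\cap[N-C\sqrt N-1,N+C\sqrt N]\bigr).
\]
Replacing $C$ by $C+1$ absorbs the shift by $1$ for $N\ge1$, and (ii) gives (iii) with $c/M_1$.

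Conversely, each $n\in\Gamma_{\mathsf S}\cap[N-C\sqrt N,N+C\sqrt N]$ has at least one sample in $[n,n+1)\subset[N-C\sqrt N,N+C\sqrt N+1]$. The count of samples therefore dominates the number of occupied cells, and again enlarging $C$ by $1$ handles the endpoint.

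The main obstacle is purely bookkeeping. The constants in (i) are tied to a window of fixed length in $\omega$, while the constants in (ii) are tied to windows of length proportional to $\sqrt N$. I would handle this by writing the exact inverse $s=(\omega^2-4)/8$. The image of $[R-L,R+L]$ is then $[N-\tfrac{LR}{4}+\tfrac{L^2}{8},\,N+\tfrac{LR}{4}+\tfrac{L^2}{8}]$, which makes all comparisons explicit. Choosing $R_0$ and $N_0$ large makes the $o(1)$ terms harmless.
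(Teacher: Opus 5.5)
Your proposal is correct and follows essentially the same route as the paper: (i)$\Leftrightarrow$(ii) via the inverse map $s=(\omega^2-4)/8$ (radial window $\leftrightarrow$ temporal window centered at $N+L^2/8$ with half-length $RL/4$, weights comparable to $1/R$), and (ii)$\Leftrightarrow$(iii) via bounded multiplicity plus enlarging the window by one unit. The only cosmetic difference is that for (ii)$\Rightarrow$(i) the paper uses the exact identity $|\omega(s)-\omega(N)|=8|s-N|/(\omega(s)+\omega(N))$, which gives a uniform radial half-length (e.g. $L=2\sqrt2\,C$) without your asymptotic expansion of the endpoint $\omega(N-C\sqrt N)$.
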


\begin{proof}
Since $\omega(s)^2=8s+4$, for $R>L+2$ the condition
$|\omega(s)-R|\le L$ is equivalent to
\[
 s\in\left[
 \frac{R^2+L^2-4}{8}-\frac{RL}{4},
 \frac{R^2+L^2-4}{8}+\frac{RL}{4}
 \right].
\]
Set $N=(R^2-4)/8$.  The interval has a center $N+L^2/8$ and
half-length $RL/4$.  Its fixed shift $L^2/8$ is absorbed by changing
the constants in a window centered at $N$, and $R\asymp\sqrt N$.
Moreover, throughout this interval,
\[
 \frac1{R+L}\le\frac1{\omega(s)}\le\frac1{R-L}.
\]
Consequently, (i) implies (ii). Conversely, the identity
\[
 |\omega(s)-\omega(N)|
 =\frac{8|s-N|}{\omega(s)+\omega(N)}
\]
shows that a temporal window $|s-N|\le C\sqrt N$ is contained
in a radial window of fixed half-length, with
$\omega(s)\asymp\sqrt N$ there.  Taking
$N=(R^2-4)/8$ for each sufficiently large $R$ proves (ii)$\Rightarrow$(i).

Each occupied unit cell contributes at least one and at most
$M_1(\mathsf S)$ temporal points. The replacement of $s$ by $\lfloor s\rfloor$
changes its distance from the center of a window by less than one.  Enlarging
the half-length by one therefore transfers the required counts in both
directions; this enlargement is absorbed in $C\sqrt N$ for large $N$.
Thus, (ii) and (iii) are equivalent.
\end{proof}

Define the equal-mass radial cells
\begin{equation}\label{eq:radial-cells}
 J_n=[\omega(n),\omega(n+1)),\qquad n\in\Nzero.
\end{equation}
They satisfy
\begin{equation}\label{eq:radial-cell-mass}
 \int_{J_n}\rho\,\dd\rho=4,
 \qquad |J_n|\asymp\omega(n)^{-1}.
\end{equation}
For $\Gamma\subset\Nzero$, put
$E_\Gamma=\bigcup_{n\in\Gamma}J_n$ and
$\dd\mu_0(\rho)=\rho\,\dd\rho$.

Because every radial cell has the same $\mu_0$-mass, the temporal counting
condition can be expressed as relative density of the union of the occupied
cells.  This reformulation is the one suited to the Fourier--Bessel
Logvinenko--Sereda theorem.

\begin{lemma}\label{lem:relative-density}
The conditions of \cref{lem:thickness-equivalence} hold if and only if
$E_{\Gamma_{\mathsf S}}$ is asymptotically $\mu_0$-relatively dense: for
some $L,\gamma,R_0>0$,
\[
 \mu_0(E_{\Gamma_{\mathsf S}}\cap[R-L,R+L])
 \ge\gamma\,\mu_0([R-L,R+L]),
 \qquad R\ge R_0.
\]
\end{lemma}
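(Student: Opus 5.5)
The plan is to compare the $\mu_0$-mass of $E_{\Gamma_{\mathsf S}}$ in a radial window with the number of occupied cells whose index lies in the corresponding temporal window. From this comparison, asymptotic relative density will be matched with condition (iii) of \cref{lem:thickness-equivalence}, namely $\sqrt N$-thickness of $\Gamma_{\mathsf S}$.

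Three facts do all the work.
\begin{itemize}
\item Each radial cell has $\mu_0(J_n)=4$ by \eqref{eq:radial-cell-mass}.
\item Its length is $|J_n|\asymp\omega(n)^{-1}$, so for large $R$ each cell meeting $[R-L,R+L]$ has length $O(1/R)$, which is negligible against $L$.
\item The window itself has $\mu_0([R-L,R+L])=2RL$.
\end{itemize}
Consequently, the set of indices $n$ with $J_n\subset[R-L,R+L]$ is exactly the set of integers in a temporal interval $[\omega^{-1}(R-L),\,\omega^{-1}(R+L)-1]$. As in the proof of \cref{lem:thickness-equivalence}, this interval has centre $N+O(1)$ and half-length $RL/4+O(1)$, where $N=(R^2-4)/8$ and $R\asymp\sqrt N$. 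At most two boundary cells meet the window without being contained in it.

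From these facts we get the two-sided estimate
\[
 4\,\#\bigl(\Gamma_{\mathsf S}\cap I_{\mathrm{in}}\bigr)\le
 \mu_0\bigl(E_{\Gamma_{\mathsf S}}\cap[R-L,R+L]\bigr)
 \le 4\,\#\bigl(\Gamma_{\mathsf S}\cap I_{\mathrm{out}}\bigr).
\]
Here $I_{\mathrm{in}}\subset I_{\mathrm{out}}$ are temporal windows centred at $N$ with half-lengths $\frac{RL}{4}\mp O(1)$. For (iii)$\Rightarrow$ relative density, take $C$ and $c$ from \eqref{eq:sqrt-thick}. Choose $L$ so that $RL/4-O(1)\ge C\sqrt N$ for large $R$; since $R\sim\sqrt{8N}$, the choice $L=4C$ works. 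The left inequality then gives mass at least $4c\sqrt N\ge\gamma\cdot 2RL$ with $\gamma\asymp c/C$. For the converse, fix $L,\gamma$ and a large $N$, and set $R=\omega(N)$. The right inequality gives $\#(\Gamma_{\mathsf S}\cap I_{\mathrm{out}})\ge \gamma RL/2\gtrsim\sqrt N$. Finally, $I_{\mathrm{out}}$ is contained in $[N-C\sqrt N,N+C\sqrt N]$ once $C$ is large, because its half-length is $RL/4+O(1)=O(\sqrt N)$ and its centre shift is $O(1)$. This yields (iii).

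The only delicate point is the bookkeeping. One must convert radial endpoints to temporal endpoints uniformly in $R$, and control the $O(1)$ shifts: the $L^2/8$ centre offset and the partial boundary cells. All of these are absorbed into the $\sqrt N$ scale for $R\ge R_0$, exactly as in the proof of \cref{lem:thickness-equivalence}. No use of $M_1(\mathsf S)$ is needed here, since the statement concerns only the occupied-cell set; $M_1(\mathsf S)$ enters only through the already-established equivalence of (i)–(iii).
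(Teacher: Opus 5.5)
Your argument is correct and is essentially the paper's. Both rest on the same ingredients: the equal $\mu_0$-mass $4$ of the cells $J_n$, a boundary discrepancy of at most two cells, $\mu_0([R-L,R+L])=2RL$, and the correspondence between fixed-width radial windows and temporal windows of width $\asymp\sqrt N$. The only difference is bookkeeping. The paper compares the mass with $m_\Gamma(R,L)=\#\{n\in\Gamma:|\omega(n)-R|\le L\}$, with error at most $8$, and then applies \cref{lem:thickness-equivalence} to the integer set $\Gamma$ itself. You instead convert your inner and outer radial windows to temporal windows explicitly and arrive directly at $\sqrt N$-thickness.
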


\begin{proof}
Write $\Gamma=\Gamma_{\mathsf S}$ and, for $R>L+2$, put
\[
 m_\Gamma(R,L)=\#\{n\in\Gamma:|\omega(n)-R|\le L\}.
\]
Every selected cell has $\mu_0$-mass $4$. Only the cells meeting the
two endpoints of the window can contribute a discrepancy, so
\[
 \left|\mu_0(E_\Gamma\cap[R-L,R+L])-4m_\Gamma(R,L)\right|\le8.
\]
Since $\mu_0([R-L,R+L])=2LR$, the asymptotic relative density is
equivalent, after adjusting the constants and increasing $R_0$, to
$m_\Gamma(R,L)\ge cR$ in every sufficiently distant window.
The weights $1/\omega(n)$ in that window are comparable to $1/R$.
Applying \cref{lem:thickness-equivalence} to the integer temporal set
$\Gamma$ precisely gives its $\sqrt N$-thickness, which is equivalent
to the conditions stated for $\mathsf S$.
\end{proof}

\section{A sampling theorem for Bessel waves}
\label{sec:bessel-sampling}

It remains to characterize the temporal frequency multisets for which the
associated Bessel waves form a frame.  The continuous ingredient is the
necessary-and-sufficient Fourier--Bessel Logvinenko--Sereda theorem of
Ghobber and Jaming \cite{GJ13}.  The discrete result is obtained by
sampling on the equal-$\rho\,\dd\rho$-mass cells from
\eqref{eq:radial-cells}.

For $h\in L^2(0,b)$ define
\begin{equation}
\label{eq:F-h}
 F_h(\rho)
 =
 \ip h{u_\rho}
 =
 2\int_0^b h(x)\sqrt{x}\,J_0(\rho x)\,\dd x .
\end{equation}
The order-zero Hankel Plancherel identity gives
\begin{align}
 \int_0^\infty \abs{F_h(\rho)}^2\rho\,\dd\rho
 &=4\norm h^2,
 \label{eq:plancherel}\\
 \int_0^\infty \abs{F_h'(\rho)}^2\rho\,\dd\rho
 &\leq 4b^2\norm h^2.
 \label{eq:bernstein}
\end{align}
Indeed, the first identity follows from the order-zero Hankel transform
with input $2h(x)x^{-1/2}$.  Since $J_0'=-J_1$, differentiation changes
the Bessel order from zero to one, and the order-one Hankel Plancherel
identity gives \eqref{eq:bernstein}.

\subsection{Cellwise estimates}

We first compare the radial energy on one cell with a point evaluation inside
that cell.  The shrinking diameter
$\operatorname{diam}J_n\asymp\omega(n)^{-1}$ makes the derivative error
small at high frequencies.

\begin{lemma}
\label{lem:cellwise}
There is a constant $C>0$ such that, for every sufficiently large $n$,
every $\xi\in J_n$, and every locally absolutely continuous function $F$,
\begin{align}
 \int_{J_n}\abs{F(\rho)}^2\rho\,\dd\rho
 &\leq
 8\abs{F(\xi)}^2
 +
 \frac{C}{\omega(n)^2}
 \int_{J_n}\abs{F'(\rho)}^2\rho\,\dd\rho,
 \label{eq:cell-lower}\\
 \abs{F(\xi)}^2
 &\leq
 C\int_{J_n}\abs{F(\rho)}^2\rho\,\dd\rho
 +
 \frac{C}{\omega(n)^2}
 \int_{J_n}\abs{F'(\rho)}^2\rho\,\dd\rho.
 \label{eq:cell-upper}
\end{align}
\end{lemma}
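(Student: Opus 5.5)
The plan is a one-dimensional fundamental-theorem-of-calculus argument on the cell $J_n$, combined with the two geometric facts in \eqref{eq:radial-cell-mass}: the cell has $\mu_0$-mass $4$ and length $|J_n|\asymp\omega(n)^{-1}$. We also use that the weight $\rho$ is essentially constant on $J_n$. Indeed, $\omega(n+1)/\omega(n)=\sqrt{(8n+12)/(8n+4)}\to1$, so for large $n$ we have $\rho\asymp\omega(n)$ uniformly on $J_n$. Precisely, $\omega(n)\le\rho\le\omega(n+1)\le(1+\epsilon)\omega(n)$.

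The basic pointwise estimate is as follows. For $\rho,\xi\in J_n$ write $F(\rho)=F(\xi)+\int_\xi^\rho F'$. Cauchy--Schwarz gives
\[
\abs{F(\rho)-F(\xi)}^2\le |J_n|\int_{J_n}\abs{F'}^2\dd\tau\le \frac{|J_n|}{\omega(n)}\int_{J_n}\abs{F'(\tau)}^2\tau\,\dd\tau .
\]
Since $|J_n|\le C_0\omega(n)^{-1}$, the right side is at most $C_0\omega(n)^{-2}\int_{J_n}\abs{F'}^2\rho\,\dd\rho$. This is exactly the derivative error appearing in both inequalities.

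For \eqref{eq:cell-lower}, use $\abs{F(\rho)}^2\le2\abs{F(\xi)}^2+2\abs{F(\rho)-F(\xi)}^2$ and integrate against $\rho\,\dd\rho$ over $J_n$. The first term gives $2\cdot4\abs{F(\xi)}^2=8\abs{F(\xi)}^2$, which is the exact constant in the statement, because the cell mass is exactly $4$. The second term gives
\[
2\cdot 4\cdot C_0\,\omega(n)^{-2}\int_{J_n}\abs{F'}^2\rho\,\dd\rho,
\]
so $C=8C_0$ works.

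For \eqref{eq:cell-upper}, reverse the roles: $\abs{F(\xi)}^2\le2\abs{F(\rho)}^2+2\abs{F(\rho)-F(\xi)}^2$ for every $\rho\in J_n$. Average this against the probability measure $\tfrac14\rho\,\dd\rho$ on $J_n$. The result is
\[
\abs{F(\xi)}^2\le\tfrac12\int_{J_n}\abs{F}^2\rho\,\dd\rho+2C_0\omega(n)^{-2}\int_{J_n}\abs{F'}^2\rho\,\dd\rho .
\]
This gives \eqref{eq:cell-upper} after taking $C$ to be the maximum of the constants arising in the two inequalities.

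There is no serious obstacle. The only point needing care is the uniform comparison $|J_n|\cdot\sup_{J_n}\rho^{-1}\lesssim\omega(n)^{-2}$, which follows from $|J_n|=\omega(n+1)-\omega(n)=8/(\omega(n+1)+\omega(n))$. This comparison is what forces the restriction to sufficiently large $n$; in fact it holds for all $n$ with a worse constant. Local absolute continuity of $F$ is exactly what justifies the fundamental theorem of calculus step.
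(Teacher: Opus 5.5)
Your proposal is correct and is essentially the paper's proof: you use the fundamental theorem of calculus plus Cauchy--Schwarz on $J_n$, the exact cell mass $4$, the cell length $\asymp\omega(n)^{-1}$, and the bound $\rho\ge\omega(n)$ on $J_n$ to reinsert the weight. For \eqref{eq:cell-upper} the paper picks a point $\rho_0\in J_n$ with $\abs{F(\rho_0)}^2\le\frac14\int_{J_n}\abs{F}^2\rho\,\dd\rho$, where you instead average against $\frac14\rho\,\dd\rho$; both give the same constants.
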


\begin{proof}
For $\rho,\xi\in J_n$, the fundamental theorem of calculus and
Cauchy--Schwarz give
\[
 \abs{F(\rho)}^2
 \leq
 2\abs{F(\xi)}^2
 +
 2\abs{\rho-\xi}
 \int_{J_n}\abs{F'(t)}^2\,\dd t.
\]
Integrating against $\rho\,\dd\rho$, using
\[
 \int_{J_n}\rho\,\dd\rho=4,
 \qquad
 \operatorname{diam}J_n\asymp\omega(n)^{-1},
\]
and observing that
\[
 \int_{J_n}\abs{F'(t)}^2\,\dd t
 \leq
 \frac{C}{\omega(n)}
 \int_{J_n}\abs{F'(t)}^2t\,\dd t,
\]
proves \eqref{eq:cell-lower}.

For \eqref{eq:cell-upper}, choose $\rho_0\in J_n$ such that
\[
 \abs{F(\rho_0)}^2
 \leq
 \frac14\int_{J_n}\abs{F(\rho)}^2\rho\,\dd\rho.
\]
Using
$F(\xi)=F(\rho_0)+\int_{\rho_0}^{\xi}F'(t)\,\dd t$
and the same Cauchy--Schwarz estimate proves \eqref{eq:cell-upper} after
enlarging $C$.
\end{proof}

The upper frame inequality follows immediately from the second cellwise
estimate and the bounded number of temporal samples in every unit cell.

\begin{lemma}
\label{lem:bessel-property}
If $M_1(\mathsf S)<\infty$, then
\(
 \{u_{\omega_k}: k\in I\}
\)
is a Bessel family in $L^2(0,b)$.
\end{lemma}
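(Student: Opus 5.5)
The plan is to bound $\sum_k|\ip h{u_{\omega_k}}|^2=\sum_k|F_h(\omega_k)|^2$ directly by the cellwise estimate \eqref{eq:cell-upper}, followed by the Plancherel identity \eqref{eq:plancherel} and the Bernstein-type bound \eqref{eq:bernstein}.

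\emph{Step 1: place each sample in a radial cell.} Since $\omega$ is increasing, $s_k\in[n,n+1)$ exactly when $\omega_k\in J_n$. By the definition of $M_1(\mathsf S)$, each cell $J_n$ therefore contains at most $M_1(\mathsf S)$ of the frequencies $\omega_k$, counted with multiplicity.

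\emph{Step 2: fix the large-cell threshold.} Choose $n_0$ so that \cref{lem:cellwise} applies for all $n\ge n_0$.

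\emph{Step 3: high-frequency samples.} For $\omega_k\in J_n$ with $n\ge n_0$, \eqref{eq:cell-upper} with $F=F_h$ gives
\[
 |F_h(\omega_k)|^2\le C\int_{J_n}|F_h|^2\rho\,\dd\rho+\frac{C}{\omega(n)^2}\int_{J_n}|F_h'|^2\rho\,\dd\rho .
\]
Using $\omega(n)\ge2$, we sum over $k$. Each cell is counted at most $M_1(\mathsf S)$ times, and the cells $J_n$ are disjoint. The high-frequency part of the sum is therefore at most
\[
 M_1(\mathsf S)\,C\Bigl(\int_0^\infty|F_h|^2\rho\,\dd\rho+\int_0^\infty|F_h'|^2\rho\,\dd\rho\Bigr)\le M_1(\mathsf S)\,C\,(4+4b^2)\norm h^2 .
\]
Here the final inequality uses \eqref{eq:plancherel} and \eqref{eq:bernstein}. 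Note that $F_h$ is smooth, indeed entire in $\rho$, so the lemma applies.

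\emph{Step 4: low-frequency samples.} For $n<n_0$ there are at most $n_0M_1(\mathsf S)$ samples, all with $\omega_k\le\omega(n_0)$. For each of them Cauchy--Schwarz gives
\[
 |F_h(\omega_k)|^2\le\norm{u_{\omega_k}}^2\norm h^2\le 4\Bigl(\int_0^b x\,\dd x\Bigr)\norm h^2=2b^2\norm h^2,
\]
since $|J_0|\le1$. This contributes at most $2b^2n_0M_1(\mathsf S)\norm h^2$.

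Adding Steps 3 and 4 gives the Bessel bound with $B=M_1(\mathsf S)\bigl(4C(1+b^2)+2b^2n_0\bigr)$.

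No step is a real obstacle. The one point needing care is that \cref{lem:cellwise} only holds for large $n$, which is why the finitely many low cells are handled separately by the trivial bound in Step 4.
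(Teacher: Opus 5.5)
Your proof is correct and follows the paper's argument: sum the cellwise upper estimate \eqref{eq:cell-upper} over the at most $M_1(\mathsf S)$ samples in each cell and then over $n$, and apply \eqref{eq:plancherel}--\eqref{eq:bernstein}. Your Cauchy--Schwarz bound $\norm{u_{\omega_k}}^2\le 2b^2$ for the finitely many low cells spells out the paper's remark that those cells are absorbed by enlarging $C$.
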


\begin{proof}
If $s_k\in[n,n+1)$, then $\omega_k\in J_n$. Thus, each $J_n$ contains at
most $M_1(\mathsf S)$ sample frequencies.  Summing
\eqref{eq:cell-upper} over the samples in each cell and then over $n$,
and using \eqref{eq:plancherel}--\eqref{eq:bernstein}, gives
\(
 \sum_{k\in I}\abs{F_h(\omega_k)}^2
 \leq C\norm h^2.
\)
The finitely many cells for which \cref{lem:cellwise} was not stated are
absorbed by increasing $C$.
\end{proof}

\subsection{Continuous Fourier--Bessel sampling}

We record the necessary-and-sufficient continuous theorem in the
normalization used in this paper. It is a rescaled order-zero case of
\cite[Theorem~4.2 and Lemma~4.1]{GJ13}.

\begin{theorem}
\label{thm:FB-LS}
Let $E\subset[0,\infty)$ be measurable.  The following are equivalent:
\begin{enumerate}[label=\textup{(\roman*)}]
\item There exist $\ell,\gamma>0$ such that
\begin{equation}
\label{eq:GJ-relative-density}
 \mu_0\bigl(E\cap[R-\ell,R+\ell]\bigr)
 \geq
 \gamma\,
 \mu_0\bigl([R-\ell,R+\ell]\bigr),
 \qquad R\geq\ell.
\end{equation}
\item There exists $c_E>0$ such that
\begin{equation}
\label{eq:GJ-restriction}
 c_E\norm h^2
 \leq
 \int_E\abs{F_h(\rho)}^2\rho\,\dd\rho,
 \qquad h\in L^2(0,b).
\end{equation}
\end{enumerate}
\end{theorem}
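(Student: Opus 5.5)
This is a normalization transfer of the Ghobber--Jaming theorem rather than a new argument. I would not reprove the uncertainty principle. Instead I would reduce both (i) and (ii) to their formulation of the order-zero Hankel transform, quote their result, and check that every rescaling preserves the two conditions with only a change of constants.

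\textbf{Step 1: identify $F_h$ with a Hankel transform.} Set $g(x)=2h(x)x^{-1/2}$ on $(0,b)$, extended by zero. Then
\[
F_h(\rho)=\int_0^b g(x)J_0(\rho x)\,x\,\dd x=\mathcal H_0 g(\rho),
\]
and $\int_0^b|g|^2x\,\dd x=4\norm h^2$. So $h\mapsto g$ is, up to the factor $2$, a unitary map from $L^2(0,b)$ onto the radial functions in $L^2(\R^2)$ supported in the disc of radius $b$. In this picture $F_h$ is (a multiple of) the two-dimensional Fourier transform of a radial function, evaluated at radius $\rho$, and $\rho\,\dd\rho$ is the radial part of planar Lebesgue measure. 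Condition (ii) therefore says exactly that the annular set $\{\xi\in\R^2:|\xi|\in E\}$ is a ``strongly annihilating'' set for radial functions supported in $[0,b]$. This is the statement of \cite[Theorem~4.2]{GJ13} for order zero.

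\textbf{Step 2: rescaling.} Their result is stated for a fixed support radius, and their Fourier kernel may be normalized as $J_0(2\pi\rho x)$. Dilating by $x\mapsto x/b$ and $\rho\mapsto b\rho$ multiplies the $\mu_0$-masses of $E\cap[R-\ell,R+\ell]$ and of $[R-\ell,R+\ell]$ by the same factor. It also replaces $\ell$ by $b\ell$ and leaves $\gamma$ unchanged. Hence relative density in the form \eqref{eq:GJ-relative-density} is invariant under the rescaling, and the constant $c_E$ changes only by a positive factor. Likewise, the restriction $R\ge\ell$ matches their convention, possibly after invoking \cite[Lemma~4.1]{GJ13}. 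That lemma shows that relative density for large $R$ with window $\ell$ is equivalent to relative density for all $R\ge\ell'$ with a larger window $\ell'$, since windows near the origin are handled by enlarging $\ell$.

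\textbf{Step 3: the two implications.} For (i)$\Rightarrow$(ii), the quoted theorem gives the Logvinenko--Sereda inequality directly. For (ii)$\Rightarrow$(i), argue by contrapositive using test functions concentrated in frequency. If density fails, there are windows $[R_j-\ell,R_j+\ell]$ with $R_j\to\infty$ (or, for fixed large $\ell$, with arbitrarily small relative mass) on which $E$ is sparse. Take $h_j$ whose transform $F_{h_j}$ is essentially a bump of fixed width centred at $R_j$. Such $h_j$ can be built from $u_{R_j}$ times a smooth cutoff, using \eqref{eq:bernstein} for control. Then $\int_E|F_{h_j}|^2\rho\,\dd\rho/\norm{h_j}^2\to0$. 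This direction is also contained in \cite{GJ13}.

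\textbf{Main obstacle.} The delicate point is bookkeeping rather than analysis. One must confirm that Ghobber--Jaming's normalization of the Hankel transform (the weights $x^{2\alpha+1}$ and the $2\pi$ convention) and their notion of relative density (a possibly different reference measure) agree with $\mu_0$ and \eqref{eq:GJ-relative-density} after the rescaling above. In particular, for $\alpha=0$ their weight is $x\,\dd x$, which matches $\mu_0$. I would carry out this matching explicitly in Steps 1 and 2.
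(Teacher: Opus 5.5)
Your proposal is correct and follows essentially the same route as the paper: both identify $F_h$ with the order-zero Hankel (Fourier--Bessel) transform of $g_h=2x^{-1/2}h\mathbf 1_{(0,b)}$, absorb the $2\pi$ normalization and the window rescaling, and quote Theorem~4.2 and Lemma~4.1 of \cite{GJ13} for both implications. Your Step~3 test-function sketch for (ii)$\Rightarrow$(i) is only heuristic as written, but you defer that direction to \cite{GJ13} as well, so nothing is missing.
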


\begin{proof}
For completeness, in the notation of \cite{GJ13}, put
\[
 g_h(x)=2x^{-1/2}h(x)\mathbf 1_{(0,b)}(x).
\]
With their order-zero Fourier--Bessel transform $\mathcal F_0$,
\[
 \mathcal F_0g_h(y)=2\pi F_h(2\pi y).
\]
Since $\mathcal F_0$ is an involutive isometry and $g_h$ is supported in
$[0,b]$, the function $\mathcal F_0g_h$ belongs to the corresponding
Fourier--Bessel Paley--Wiener space.  The change in variables
$\rho=2\pi y$ converts their measure into a constant multiple of
$\rho\,\dd\rho$ and preserves the relative density after rescaling the window
length.  Theorem~4.2 and Lemma~4.1 of \cite{GJ13} therefore give precisely
the equivalence of \eqref{eq:GJ-relative-density} and
\eqref{eq:GJ-restriction}.
\end{proof}

The asymptotic relative-density condition in
\cref{lem:relative-density} is equivalent to
\eqref{eq:GJ-relative-density}. Indeed, if the condition in
\cref{lem:relative-density} holds with the parameters $L,\gamma,R_0$, take
\(
 \ell\geq\max\{L,R_0\}.
\)
For $R\geq\ell$, the smaller interval
$[R-L,R+L]$ lies inside $[R-\ell,R+\ell]$, and
\[
 \frac{\mu_0([R-L,R+L])}
      {\mu_0([R-\ell,R+\ell])}
 =
 \frac{L}{\ell}.
\]
Thus, \eqref{eq:GJ-relative-density} holds with density parameter
$\gamma L/\ell$.  The converse is immediate.

The following compact-remainder criterion is Peetre's lemma; see
\cite[Lemma~3]{Peetre61}.  For Hilbert spaces $X$ and $Z$, write
$\mathcal K(X,Z)$ for the space of compact linear operators from $X$ to $Z$.

\begin{lemma}
\label{lem:peetre}
Let $X,Y,Z$ be Hilbert spaces, let $B\in\mathcal B(X,Y)$, and let
$K\in\mathcal K(X,Z)$.  If
\[
 \norm x_X
 \leq
 C\norm{Bx}_Y+\norm{Kx}_Z,
 \qquad x\in X,
\]
then $\Ker B$ is finite-dimensional and $\Ran B$ is closed.  If
$\Ker B=\{0\}$, then $B$ is bounded below.
\end{lemma}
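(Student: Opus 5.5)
We prove Peetre's lemma in two steps: first finite-dimensionality of $\Ker B$ by a compactness argument on the unit ball, then closed range and bounded-belowness by a contradiction argument with normalized sequences on the orthogonal complement of the kernel.

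\emph{Step 1: $\Ker B$ is finite-dimensional.} On $\Ker B$ the hypothesis reduces to $\norm x_X\le\norm{Kx}_Z$. Take any bounded sequence $(x_j)$ in the closed unit ball of $\Ker B$. By compactness of $K$, a subsequence has $(Kx_j)$ Cauchy in $Z$. Applying the inequality to $x_j-x_i\in\Ker B$ gives $\norm{x_j-x_i}\le\norm{K(x_j-x_i)}$, so $(x_j)$ is Cauchy in $X$. Hence the closed unit ball of $\Ker B$ is compact. By Riesz's lemma, $\Ker B$ is finite-dimensional.

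\emph{Step 2: $B$ is bounded below on $X_1=(\Ker B)^\perp$.} We argue by contradiction. Suppose $x_j\in X_1$ with $\norm{x_j}=1$ and $Bx_j\to0$. Pass to a subsequence with $x_j\rightharpoonup x$ weakly. Because $X_1$ is closed and convex, it is weakly closed, so $x\in X_1$. Compactness of $K$ turns weak convergence into norm convergence, $Kx_j\to Kx$, so $(Kx_j)$ is Cauchy. Applying the hypothesis to $x_j-x_i$ gives
\[
\norm{x_j-x_i}\le C\norm{Bx_j-Bx_i}+\norm{Kx_j-Kx_i}\to0,
\]
so $x_j\to x$ in norm. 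Therefore $\norm x=1$, and continuity of $B$ gives $Bx=0$. Thus $x\in\Ker B\cap X_1=\{0\}$, contradicting $\norm x=1$.

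\emph{Step 3: conclusions.} By Step 2 there is $c>0$ with $\norm{Bx}\ge c\norm x$ for $x\in X_1$. This gives closed range: if $Bx_j\to y$, write $x_j=P_{X_1}x_j+k_j$ with $k_j\in\Ker B$. Then $(P_{X_1}x_j)$ is Cauchy, hence converges to some $x'$, and $Bx'=y$. Finally, if $\Ker B=\{0\}$ then $X_1=X$, so Step 2 says $B$ is bounded below on all of $X$.

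The only delicate point is Step 2. A compact operator is needed to upgrade weak convergence to norm convergence, and applying the inequality to differences $x_j-x_i$ is what turns the constant $1$ in front of $\norm{Kx}$ into a Cauchy estimate.
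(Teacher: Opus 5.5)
Your proof is correct and follows the paper's argument step for step. You show that the unit ball of $\Ker B$ is compact using $\norm{x}\le\norm{Kx}$ there. You then argue by contradiction with unit vectors in $(\Ker B)^\perp$, and applying the estimate to differences $x_j-x_i$ upgrades convergence of $(Kx_j)$ to norm convergence of $(x_j)$.

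The differences are only cosmetic. You obtain the convergent subsequence of $(Kx_j)$ through weak convergence, whereas the paper uses compactness of $K$ directly. The weak closedness of $X_1$ is not actually needed, because the norm limit already lies in that closed subspace. You also spell out the closed-range deduction, which the paper leaves implicit.
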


\begin{proof}
On $\Ker B$, the estimate becomes $\norm x_X\le\norm{Kx}_Z$.
Compactness of $K$ therefore makes the unit ball of $\Ker B$ compact,
so $\Ker B$ is finite-dimensional. If $B$ were not bounded below on
$(\Ker B)^\perp$, there would be unit vectors $x_j$ in that subspace
with $Bx_j\to0$. After passing to a subsequence, $Kx_j$ converges.
Applying the assumed estimate to $x_j-x_k$ shows that this subsequence
converges in $X$. Its limit is a unit vector in both $\Ker B$ and
$(\Ker B)^\perp$, a contradiction. Thus $B$ is bounded below on
$(\Ker B)^\perp$, which gives closed range and the final assertion.
\end{proof}

\subsection{The discrete characterization}

Kummer thickness and the continuous theorem first give a lower estimate
modulo a compact low-frequency term.  Cellwise discretization then yields
the corresponding semi-Fredholm estimate for the sampled Bessel waves.

\begin{proposition}
\label{prop:lower-estimate}
If \eqref{eq:weighted-thickness-direct} holds, then the analysis operator
\[
 B_{\mathsf S}:L^2(0,b)\longrightarrow\ell^2(I),
 \qquad
 B_{\mathsf S}h=(F_h(\omega_k))_{k\in I},
\]
has closed range and finite-dimensional kernel.
\end{proposition}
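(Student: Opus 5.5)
The plan is to prove the estimate $\norm h\le C\norm{B_{\mathsf S}h}_{\ell^2}+\norm{Kh}_Z$ with a compact operator $K$, and then apply \cref{lem:peetre}.

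\textbf{Step 1: the continuous lower bound.} Write $\Gamma=\Gamma_{\mathsf S}$ and $E=E_\Gamma$. By \cref{lem:thickness-equivalence}, Kummer thickness implies the asymptotic relative-density condition of \cref{lem:relative-density}. The remark following \cref{thm:FB-LS} converts this into \eqref{eq:GJ-relative-density}, so \cref{thm:FB-LS} gives
\[
 c_E\norm h^2\le\int_{E}\abs{F_h(\rho)}^2\rho\,\dd\rho .
\]

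\textbf{Step 2: discretize the high cells.} Fix a large cutoff $N$, to be chosen later, and split $E$ into the cells $J_n$ with $n\in\Gamma$, $n<N$, and those with $n\ge N$. Every $n\in\Gamma$ has at least one $k$ with $s_k\in[n,n+1)$, hence $\omega_k\in J_n$. Applying \eqref{eq:cell-lower} with $\xi=\omega_k$ to each high cell and summing gives
\[
 \sum_{n\in\Gamma,\,n\ge N}\int_{J_n}\abs{F_h}^2\rho\,\dd\rho
 \le 8\sum_{k}\abs{F_h(\omega_k)}^2+\frac{C}{\omega(N)^2}\int_0^\infty\abs{F_h'}^2\rho\,\dd\rho .
\]
Each cell uses only one chosen sample, so the sum over $k$ is bounded by $\norm{B_{\mathsf S}h}^2$. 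By \eqref{eq:bernstein}, the last term is at most $4Cb^2\omega(N)^{-2}\norm h^2$. Choose $N$ so large that this is at most $\tfrac{c_E}{2}\norm h^2$, and absorb it into the left side of Step 1.

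\textbf{Step 3: the low cells give a compact term.} What remains is
\[
 \int_{[0,\omega(N)]}\abs{F_h}^2\rho\,\dd\rho=\norm{Kh}^2_{L^2([0,\omega(N)],\rho\,\dd\rho)},
 \qquad Kh=F_h|_{[0,\omega(N)]}.
\]
The operator $K$ is compact. Its kernel $2\sqrt x\,J_0(\rho x)$ is continuous and bounded on $[0,b]\times[0,\omega(N)]$, so $K$ is Hilbert--Schmidt. Combining the three steps gives
\[
 \tfrac{c_E}{2}\norm h^2\le 8\norm{B_{\mathsf S}h}^2+\norm{Kh}^2 ,
\]
and taking square roots yields the required estimate with suitable constants. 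The operator $B_{\mathsf S}$ is bounded by \cref{lem:bessel-property}. \Cref{lem:peetre} then gives closed range and a finite-dimensional kernel.

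\textbf{Main obstacle.} The delicate point is Step 2. The derivative error must be made uniformly small relative to $c_E$, and this relies on the shrinking cell diameter $\operatorname{diam}J_n\asymp\omega(n)^{-1}$ encoded in \eqref{eq:cell-lower}, together with the global Bernstein bound. The bookkeeping also needs care: the selected sample in each occupied cell must be distinct across cells, which holds because the cells $J_n$ are disjoint.
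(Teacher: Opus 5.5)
Your proposal is correct and follows the paper's proof essentially step for step: the Ghobber--Jaming lower bound on $E_{\Gamma}$, one representative sample per occupied high cell via \eqref{eq:cell-lower} with the Bernstein bound \eqref{eq:bernstein} absorbing the derivative error, a Hilbert--Schmidt low-frequency remainder, and \cref{lem:peetre}. You also explicitly cite \cref{lem:bessel-property} for the boundedness of $B_{\mathsf S}$, which \cref{lem:peetre} requires; the paper leaves this step implicit.
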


\begin{proof}
Put
\(
 \Gamma=\Gamma_{\mathsf S},
 \) and \(
 E=E_\Gamma.
\)
By \cref{lem:relative-density,thm:FB-LS}, there is $c_0>0$ such that
\begin{equation}
\label{eq:continuous-lower}
 c_0\norm h^2
 \leq
 \int_E\abs{F_h(\rho)}^2\rho\,\dd\rho.
\end{equation}

Choose one sample
\(
 \xi_n=\omega(s_{k(n)})\in J_n
\)
from every occupied cell $n\in\Gamma$.  Fix $N$ sufficiently large so that
\cref{lem:cellwise} applies for $n\geq N$ and
\[
 \frac{4Cb^2}{\omega(N)^2}\leq\frac{c_0}{2}.
\]
Summing \eqref{eq:cell-lower} over $n\in\Gamma$, $n\geq N$, and using
\eqref{eq:bernstein}, we obtain
\[
 \sum_{\substack{n\in\Gamma\\n\geq N}}
 \int_{J_n}\abs{F_h(\rho)}^2\rho\,\dd\rho
 \leq
 8\sum_{\substack{n\in\Gamma\\n\geq N}}
 \abs{F_h(\xi_n)}^2
 +
 \frac{c_0}{2}\norm h^2.
\]
The low-frequency restriction
\[
 K_Nh
 =
 F_h\big|_{E\cap[0,\omega(N)]}
\]
is Hilbert--Schmidt from $L^2(0,b)$ into
$L^2(E\cap[0,\omega(N)],\mu_0)$, since its kernel
$2\sqrt{x}J_0(\rho x)$ is square-integrable on the relevant bounded
rectangle.  Combining this observation with
\eqref{eq:continuous-lower} yields
\[
 \norm h^2
 \leq
 C\sum_{n\in\Gamma}\abs{F_h(\xi_n)}^2
 +
 C\norm{K_Nh}^2
 \leq
 C\sum_{k\in I}\abs{F_h(\omega_k)}^2
 +
 C\norm{K_Nh}^2.
\]
Peetre's lemma proves the assertion.
\end{proof}

The compact-remainder estimate leaves open a finite-dimensional kernel.
The following short argument rules it out.

\begin{lemma}
\label{lem:completeness}
Under \eqref{eq:weighted-thickness-direct}, the family
\(
 \{u_{\omega_k}:k\in I\}
\)
is complete in $L^2(0,b)$.
\end{lemma}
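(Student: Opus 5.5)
If $h\in L^2(0,b)$ is orthogonal to every $u_{\omega_k}$, the plan is to show that $F_h$ vanishes on a set with a finite accumulation point, and then use analyticity. The function $F_h(\rho)=2\int_0^b h(x)\sqrt x\,J_0(\rho x)\dd x$ extends to an even entire function of $\rho$ of exponential type at most $b$, because $J_0(\rho x)$ is entire and even in $\rho$ with $|J_0(z)|\le e^{|\Im z|}$. The hypothesis gives $F_h(\omega_k)=0$ for all $k\in I$. Kummer thickness forces infinitely many distinct frequencies, since every distant window $[R-L,R+L]$ carries positive $\nu_{\mathsf S}$-mass. These frequencies escape to infinity, however, so the identity theorem does not apply directly. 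I would therefore use a density (zero-counting) argument rather than mere accumulation.

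The key step is to compare the number of zeros with the growth allowed by exponential type. By \cref{lem:thickness-equivalence} and \cref{lem:relative-density}, in every distant window $[R-L,R+L]$ the number of occupied cells $m_\Gamma(R,L)$ is at least $cR$, so there are at least $cR$ distinct sample frequencies there. Covering $[R_0,R]$ by about $R/(2L)$ disjoint windows and summing gives at least $c'R^2$ distinct zeros of $F_h$ in $[0,R]$. An entire function of exponential type $b$ that is not identically zero has $n(R)=O(R)$ zeros in the disc of radius $R$, by Jensen's formula. Since $c'R^2$ eventually exceeds any $O(R)$ bound, $F_h\equiv0$. The Hankel Plancherel identity \eqref{eq:plancherel} then gives $\norm h=0$, which proves completeness.

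The main obstacle is the bookkeeping that turns Kummer thickness into a count of distinct zeros, as opposed to a count with multiplicity. I would handle it through $\Gamma_{\mathsf S}$: distinct occupied cells $J_n$ are disjoint, so choosing one sample per cell gives distinct zeros. Lemma~\ref{lem:relative-density} supplies the linear-in-$R$ count per window. One should also check that the exponential-type bound on $F_h$ is uniform enough for Jensen's formula, which is clear from $|J_0(z)|\le e^{|\Im z|}$ and $h\sqrt x\in L^1(0,b)$.

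Alternatively, completeness follows from \cref{prop:lower-estimate} applied after deleting finitely many samples. Deleting them preserves thickness, and the kernel dimension would then have to grow without bound, which contradicts its finiteness. The zero-counting route above is more direct, so that is the one I would write.
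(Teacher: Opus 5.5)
Your zero-counting argument is correct and is essentially the paper's proof. $F_h$ is even, entire and of exponential type at most $b$, so Jensen's formula allows only $O(R)$ zeros in the disc of radius $R$, while thickness on disjoint windows spaced by a fixed multiple of $L$ forces $\gtrsim R^2$ distinct zeros; the paper gets distinctness directly from the weighted mass $\eta$ and the bound $M_1(\mathsf S)$ on repeated frequencies rather than by choosing one sample per occupied cell, which is an immaterial difference.

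Your closing alternative via \cref{prop:lower-estimate} does not work as stated. Every finitely thinned family still has a finite-dimensional kernel by that proposition, so kernel growth along a sequence of thinnings contradicts nothing unless you control it uniformly or make infinitely many thickness-preserving deletions. Even showing the kernel grows at all needs a Paley--Wiener-type division argument, so you are right to write the zero-counting route.
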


\begin{proof}
Suppose that $F_h(\omega_k)=0$ for every $k$.  Since $J_0$ is even and
entire, $F_h$ is even and entire.  The integral representation of $J_0$ and
Cauchy--Schwarz give
\(
 \abs{F_h(z)}\leq C_h e^{b\abs z}\), \(z\in\C.
\)
Thus $F_h$ has exponential type at most $b$.  Unless $F_h\equiv0$, factor
out its zero at the origin, if any, and call the remaining entire function
$G$, so that $G(0)\ne0$. Jensen's formula on the disk of radius $2R$
gives
\[
 n_G(R)\log2
 \le\frac1{2\pi}\int_0^{2\pi}\log|G(2Re^{it})|\,\dd t
       -\log|G(0)|
 \le 2bR+O(\log R),
\]
where $n_G(R)$ counts the zeros in $|z|\le R$ with multiplicity.
Thus the number of zeros of $F_h$ in that disk is $O(R)$.

Let $L,\eta,R_0$ be the constants in
\eqref{eq:weighted-thickness-direct}.  Choose
\(
 r_j=R_0+3Lj
\)
and choose $j_0$ so that $r_j-L>0$ for $j\geq j_0$.  Retain the indices
$j\geq j_0$ for which $r_j\leq R$.  The intervals
$[r_j-L,r_j+L]$ are pairwise disjoint.  In the interval centered at $r_j$,
every weight is at most $(r_j-L)^{-1}$, and hence the number of sample
occurrences in that interval is at least
\(
 \eta(r_j-L).
\)
An exact sample frequency can occur at most $M_1(\mathsf S)$ times.
Therefore, the interval contains at least a fixed positive multiple of
$r_j$ distinct zeros of $F_h$.  Since
\[
 \sum_{r_j\leq R}r_j\asymp R^2,
\]
the disk of radius $R+L$ contains at least $cR^2$ distinct zeros, a
contradiction.  Hence $F_h\equiv0$, and \eqref{eq:plancherel} gives $h=0$.
\end{proof}

The upper estimate, the closed-range conclusion, and the preceding
completeness lemma now combine to give the frame implication.

\begin{corollary}
\label{cor:sufficiency-bessel}
If \eqref{eq:weighted-thickness-direct} holds, then
\(
 \{u_{\omega_k}:k\in I\}
\)
is a frame for $L^2(0,b)$.
\end{corollary}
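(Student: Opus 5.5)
The plan is to assemble the three preceding ingredients in the standard way. First, by \cref{lem:bessel-property}, the hypothesis $M_1(\mathsf S)<\infty$ already makes $\{u_{\omega_k}:k\in I\}$ a Bessel family. In other words, the analysis operator $B_{\mathsf S}h=(F_h(\omega_k))_{k\in I}$ is bounded from $L^2(0,b)$ into $\ell^2(I)$, and this gives the upper frame inequality. It remains to prove the lower inequality $\sum_k\abs{F_h(\omega_k)}^2\ge A\norm h^2$. This is precisely the statement that $B_{\mathsf S}$ is bounded below.

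Second, I would invoke \cref{prop:lower-estimate}. Under \eqref{eq:weighted-thickness-direct} it yields the estimate
\[
\norm h^2\le C\norm{B_{\mathsf S}h}^2+C\norm{K_Nh}^2 ,
\]
where $K_N$ is Hilbert--Schmidt and hence compact. Taking square roots puts this in the form required by \cref{lem:peetre}, with $B=\sqrt{C}\,B_{\mathsf S}$ and compact part $\sqrt C K_N$. Peetre's lemma then shows that $\Ker B_{\mathsf S}$ is finite-dimensional and $\Ran B_{\mathsf S}$ is closed. Moreover, if $\Ker B_{\mathsf S}=\{0\}$, then $B_{\mathsf S}$ is bounded below.

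Third, note that $h\in\Ker B_{\mathsf S}$ means $\ip h{u_{\omega_k}}=F_h(\omega_k)=0$ for all $k$, i.e. $h$ is orthogonal to every $u_{\omega_k}$. \Cref{lem:completeness} says the family is complete under \eqref{eq:weighted-thickness-direct}, so $h=0$ and the kernel is trivial. The final clause of \cref{lem:peetre} therefore gives $\norm{B_{\mathsf S}h}\ge c\norm h$, which is the lower frame bound with $A=c^2$. Together with the Bessel bound, this proves the corollary.

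There is no real obstacle here; all the analytic work sits in the cited results. The only point needing care is matching the squared estimate of \cref{prop:lower-estimate} with the unsquared hypothesis of \cref{lem:peetre}. This is handled by $\sqrt{x^2+y^2}\le x+y$. It also requires noting that the kernel of the analysis operator is exactly the orthogonal complement of the span of the family.
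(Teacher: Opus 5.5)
Your proposal is correct and follows the paper's proof: you take the upper bound from \cref{lem:bessel-property}, closed range and finite-dimensional kernel of $B_{\mathsf S}$ from \cref{prop:lower-estimate} via \cref{lem:peetre}, and a trivial kernel from \cref{lem:completeness}, so $B_{\mathsf S}$ is bounded below. The only difference is cosmetic: you redo the Peetre step, including the square-root matching, which the paper already packages inside the proof of \cref{prop:lower-estimate}.
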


\begin{proof}
The Bessel property follows from \cref{lem:bessel-property}.
By \cref{prop:lower-estimate}, the analysis operator has a closed range and
finite-dimensional kernel, while \cref{lem:completeness} makes that kernel
trivial.  It is therefore bounded below.
\end{proof}

For necessity, it remains to pass from the discrete lower frame inequality
to continuous observation on the union of the occupied cells and then use
the Ghobber--Jaming theorem.

\begin{proposition}
\label{prop:necessity-bessel}
If
\(
 \{u_{\omega_k}:k\in I\}
\)
is a frame for $L^2(0,b)$, then
\eqref{eq:weighted-thickness-direct} holds.
\end{proposition}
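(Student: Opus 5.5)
The plan is to turn the discrete lower frame inequality into a continuous lower bound for $\int_E\abs{F_h}^2\rho\,\dd\rho$ with $E=E_{\Gamma_{\mathsf S}}$, apply \cref{thm:FB-LS} in the direction (ii)$\Rightarrow$(i), and then return to Kummer thickness through \cref{lem:relative-density} and \cref{lem:thickness-equivalence}. Let $A>0$ be the lower frame bound, so that
\[
 A\norm h^2\le\sum_{k\in I}\abs{F_h(\omega_k)}^2,\qquad h\in L^2(0,b).
\]

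First, split the samples at a large cell index $N$. The samples with $s_k<N$ contribute $\sum_{\omega_k<\omega(N)}\abs{F_h(\omega_k)}^2$. This is the norm squared of a finite-rank operator, since only finitely many $k$ are involved (at most $NM_1(\mathsf S)$ of them). Next handle the samples with $s_k\ge N$. Each such $\omega_k$ lies in $J_n$ with $n\in\Gamma_{\mathsf S}$, $n\ge N$, and each cell holds at most $M_1(\mathsf S)$ samples. Apply \eqref{eq:cell-upper} to every sample and sum, using \eqref{eq:bernstein} to bound the total derivative term. This gives
\[
 \sum_{s_k\ge N}\abs{F_h(\omega_k)}^2
 \le CM_1(\mathsf S)\int_{E}\abs{F_h}^2\rho\,\dd\rho
 +\frac{CM_1(\mathsf S)4b^2}{\omega(N)^2}\norm h^2 .
\]
Take $N$ so large that the last coefficient is at most $A/2$. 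We then obtain
\[
 \norm h^2\le C'\int_E\abs{F_h}^2\rho\,\dd\rho+\norm{K h}^2
\]
with $K$ of finite rank.

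The main obstacle is removing this finite-rank remainder, because \cref{thm:FB-LS} needs a clean estimate. I would use a standard compactness argument. Suppose no constant $c_E$ exists. Then there are unit vectors $h_j$ with $\int_E\abs{F_{h_j}}^2\rho\,\dd\rho\to0$. Pass to a weakly convergent subsequence $h_j\rightharpoonup h$; since $K$ is compact, $Kh_j\to Kh$. The estimate above then gives $1\le\norm{Kh}^2$, so $h\ne0$. On the other hand, $F_{h_j}\to F_h$ pointwise and locally uniformly, because $F_h(\rho)=\ip h{u_\rho}$ and the map $\rho\mapsto u_\rho$ is continuous into $L^2$. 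By Fatou's lemma, $\int_E\abs{F_h}^2\rho\,\dd\rho=0$. Now $E$ is a union of intervals $J_n$ and is nonempty; if $E$ were empty, then $\mathsf S$ would be finite, which contradicts the frame property in infinite dimensions. Hence $F_h$ vanishes on an interval. Since $F_h$ is entire, as noted in \cref{lem:completeness}, it follows that $F_h\equiv0$, and \eqref{eq:plancherel} gives $h=0$. This is a contradiction.

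Thus \eqref{eq:GJ-restriction} holds. By \cref{thm:FB-LS}, $E$ satisfies \eqref{eq:GJ-relative-density}, which in particular gives the asymptotic relative density in \cref{lem:relative-density}. That lemma, together with \cref{lem:thickness-equivalence}, then yields \eqref{eq:weighted-thickness-direct}.
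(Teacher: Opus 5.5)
Your proposal is correct and follows essentially the paper's argument. You split off the finitely many low-cell samples as a finite-rank term, sum \eqref{eq:cell-upper} with \eqref{eq:bernstein} to get a continuous lower estimate modulo that compact remainder, remove the remainder because $F_h$ is entire and vanishes on a nondegenerate cell, and conclude via \cref{thm:FB-LS} and \cref{lem:relative-density}. There are two minor differences: you restrict to $E_{\Gamma_{\mathsf S}}$ rather than the tail union $E_N$, which avoids the superset step and needs only one occupied cell, and you prove the compactness step inline by weak limits and Fatou instead of citing Peetre's lemma (\cref{lem:peetre}).
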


\begin{proof}
Let $A>0$ be a lower frame bound and put
\(
 M=M_1(\mathsf S),
 \) and \(
 \Gamma=\Gamma_{\mathsf S}.
\)
For $N\in\mathbb N$, define
\[
 E_N
 =
 \bigcup_{\substack{n\in\Gamma\\n\geq N}}J_n
\]
and let
\(
 K_Nh
 =
 \bigl(F_h(\omega_k)\bigr)_{\lfloor s_k\rfloor<N}.
\)
The operator $K_N$ has a finite-dimensional range because only finitely many
samples lie in the first $N$ temporal cells.

Summing \eqref{eq:cell-upper} over all samples in cells $J_n$ with
$n\geq N$, and using the fact that each such cell contains at most $M$ sample
frequencies gives
\[
 \sum_{\lfloor s_k\rfloor\geq N}
 \abs{F_h(\omega_k)}^2
 \leq
 CM\int_{E_N}\abs{F_h(\rho)}^2\rho\,\dd\rho
 +
 \frac{CM}{\omega(N)^2}
 \int_0^\infty\abs{F_h'(\rho)}^2\rho\,\dd\rho.
\]
By \eqref{eq:bernstein}, we may choose $N$ so large that the final term is
at most $(A/2)\norm h^2$.  The lower frame inequality then gives
\begin{equation}
\label{eq:continuous-lower-mod-finite}
 \frac{A}{2}\norm h^2
 \leq
 CM\int_{E_N}\abs{F_h(\rho)}^2\rho\,\dd\rho
 +
 \norm{K_Nh}^2.
\end{equation}

Let
\(
 \mathcal R_Nh=F_h|_{E_N}.
\)
This restriction operator is bounded by \eqref{eq:plancherel}.  Moreover,
$\Ker\mathcal R_N=\{0\}$.  Indeed, the frame contains infinitely many
vectors, and bounded unit-cell multiplicity therefore forces infinitely
many occupied cells. Thus, $E_N$ contains a nondegenerate interval
$J_n$.  If $\mathcal R_Nh=0$, then the entire function $F_h$ vanishes
almost everywhere, and hence everywhere, on $J_n$.  It follows that
$F_h\equiv0$, and \eqref{eq:plancherel} gives $h=0$.

Taking square roots in \eqref{eq:continuous-lower-mod-finite} and using
$\sqrt{u+v}\leq\sqrt u+\sqrt v$ gives an estimate in the form required by
Peetre's lemma.  Since $K_N$ is of finite rank and
$\Ker\mathcal R_N=\{0\}$, the lemma shows that $\mathcal R_N$ is bounded
below:
\[
 c_N\norm h^2
 \leq
 \int_{E_N}\abs{F_h(\rho)}^2\rho\,\dd\rho,
 \qquad h\in L^2(0,b).
\]
The necessity direction of \cref{thm:FB-LS} now implies that $E_N$ is
$\mu_0$-relatively dense.  Its superset $E_\Gamma$ is therefore
asymptotically $\mu_0$-relatively dense.  By
\cref{lem:relative-density}, this is equivalent to
\eqref{eq:weighted-thickness-direct}.
\end{proof}

Together, the preceding results give the discrete characterization. Sufficiency is \cref{cor:sufficiency-bessel}, and necessity is
\cref{prop:necessity-bessel}.

\begin{theorem}
\label{thm:bessel-wave}
For a temporal multisequence of bounded unit-cell multiplicity,
\(
 \{u_{\omega(s_k)}:k\in I\}
\)
is a frame for $L^2(0,b)$ if and only if
\eqref{eq:weighted-thickness-direct} holds.
\end{theorem}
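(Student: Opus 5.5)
The theorem is the conjunction of the two directions already established, so the proof amounts to assembling them and checking that the standing hypothesis $M_1(\mathsf S)<\infty$ is exactly what each ingredient needs.

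For sufficiency, assume \eqref{eq:weighted-thickness-direct}. I will first invoke \cref{lem:bessel-property}, which uses only bounded unit-cell multiplicity, for the upper frame bound. Next, \cref{lem:relative-density} translates Kummer thickness into asymptotic $\mu_0$-relative density of $E_{\Gamma_{\mathsf S}}$. The remark following \cref{thm:FB-LS} upgrades this to \eqref{eq:GJ-relative-density}, so the continuous estimate \eqref{eq:GJ-restriction} holds. The cellwise discretization in \cref{prop:lower-estimate} then gives a lower bound for the analysis operator modulo a compact low-frequency remainder. By \cref{lem:peetre}, the analysis operator has closed range and finite-dimensional kernel. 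Finally, \cref{lem:completeness} shows the kernel is trivial: the Jensen count of $O(R)$ zeros conflicts with the $\gtrsim R^2$ distinct sample frequencies forced by thickness. Hence the analysis operator is bounded below, which is \cref{cor:sufficiency-bessel}.

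For necessity, assume the family is a frame with lower bound $A$. I will follow \cref{prop:necessity-bessel}. Summing \eqref{eq:cell-upper} over high cells and absorbing the derivative term with \eqref{eq:bernstein} bounds $\norm h^2$ by the continuous restriction to $E_N$ plus a finite-rank term. Injectivity of that restriction follows from analyticity of $F_h$ on a nondegenerate cell $J_n$; infinitely many cells are occupied, since a frame for the infinite-dimensional space $L^2(0,b)$ needs infinitely many vectors and $M_1<\infty$. \cref{lem:peetre} then makes the restriction to $E_N$ bounded below. The necessity half of \cref{thm:FB-LS} gives relative density of $E_N\subset E_{\Gamma_{\mathsf S}}$, and \cref{lem:relative-density} converts this back to \eqref{eq:weighted-thickness-direct}.

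The only delicate point is a bookkeeping one: the thickness conditions must be matched consistently. The asymptotic window condition with threshold $R_0$ has to agree with the Ghobber--Jaming condition for all $R\ge\ell$ in both directions. Going from asymptotic to global, I enlarge $\ell$ as in the remark after \cref{thm:FB-LS}. Going from global to asymptotic is immediate. With that checked, the two implications combine to give the theorem.
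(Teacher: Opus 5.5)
Your proposal is correct and follows the paper's route exactly: the paper proves the theorem by combining \cref{cor:sufficiency-bessel} (Bessel bound, Ghobber--Jaming with cellwise discretization and Peetre's lemma, then the Jensen completeness argument) with \cref{prop:necessity-bessel} (cellwise upper bound, Peetre's lemma for the restriction to $E_N$, and the necessity half of \cref{thm:FB-LS}). Your bookkeeping step, matching the asymptotic and global density conditions, is the same observation the paper records right after \cref{thm:FB-LS}.
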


\section{Kummer-thick sampling and integer refinements}

We now transport the Bessel-wave characterization back to the atomic model.
The resulting fractional theorem is exact, while the integer specialization
retains the stronger coordinate, angle, and interpolation conclusions of
Section~2.

\begin{theorem}
\label{thm:main}
Let $a>0$ and let
$\mathsf S=(s_k)_{k\in I}\subset[0,\infty)$ have bounded unit-cell
multiplicity.  The following are equivalent:
\begin{enumerate}[label=\textup{(\roman*)}]
\item $\{T_a^{s_k}k_0:k\in I\}$ is a frame for $K_a$;
\item $\mathsf S$ is Kummer-thick;
\item the temporal count \eqref{eq:direct-temporal-count} holds;
\item the occupied-cell set $\Gamma_{\mathsf S}$ is $\sqrt N$-thick.
\end{enumerate}
The same result holds with $\theta_a$ replaced by
\[
 \theta_{a,\zeta}(z)=\exp\!\left(-a\frac{\zeta+z}{\zeta-z}\right),
 \qquad \zeta\in\Torus,
\]
with
$T_{a,\zeta}=P_{K_{\theta_{a,\zeta}}}M_z|_{K_{\theta_{a,\zeta}}}$ and the
corresponding choice of logarithmic branch.
\end{theorem}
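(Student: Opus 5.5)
The proof assembles results already established; the plan is to chain them. First, (i)$\Leftrightarrow$ frame property of Bessel waves: by \cref{thm:transmutation}, specifically \eqref{eq:frame-transmutation-equivalence}, $\{T_a^{s_k}k_0\}$ is a frame for $K_a$ exactly when $\{u_{\omega(s_k)}\}$ is a frame for $L^2(0,b)$ with $b=\sqrt a$. This step uses the unitary $V\mathcal U$ from \cref{prop:kummer-orbit} together with the bounded invertibility of $\mathcal W_b$. Second, \cref{thm:bessel-wave} gives that the Bessel-wave family is a frame if and only if \eqref{eq:weighted-thickness-direct} holds, i.e.\ $\mathsf S$ is Kummer-thick. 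This proves (i)$\Leftrightarrow$(ii) under the standing hypothesis $M_1(\mathsf S)<\infty$. Third, (ii)$\Leftrightarrow$(iii)$\Leftrightarrow$(iv) is exactly \cref{lem:thickness-equivalence}. Since $b$ enters none of the thickness conditions, independence of $a$ is automatic.

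For the rotated atom $\zeta\in\Torus$, I would use the unitary rotation $(R_\zeta f)(z)=f(\bar\zeta z)$ on $H^2$. It maps $\theta_a H^2$ onto $\theta_{a,\zeta}H^2$, because $\theta_a(\bar\zeta z)=\theta_{a,\zeta}(z)$. Hence $R_\zeta$ restricts to a unitary $K_a\to K_{\theta_{a,\zeta}}$ and commutes with $P_{K}$ in the obvious sense. It intertwines $M_z$ with $\zeta M_z$, since $R_\zeta(zf)=\bar\zeta z R_\zeta f$. Thus $R_\zeta T_a R_\zeta^{-1}=\bar\zeta\,T_{a,\zeta}$, so $T_{a,\zeta}$ is unitarily equivalent to $\zeta T_a$. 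Moreover $R_\zeta 1=1$, so $R_\zeta k_0=k_0^{\theta_{a,\zeta}}$.

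The spectrum of $T_{a,\zeta}$ is $\{\zeta\}$. The ``corresponding logarithmic branch'' is taken to be $\Log_\zeta(w)=\log\zeta+\Log(\bar\zeta w)$ for a fixed choice of $\log\zeta$. Then $T_{a,\zeta}^s=e^{s\log\zeta}R_\zeta T_a^sR_\zeta^{-1}$ by the holomorphic functional calculus. Consequently $T_{a,\zeta}^{s_k}k_0^{\theta_{a,\zeta}}=e^{s_k\log\zeta}R_\zeta(T_a^{s_k}k_0)$. Unimodular scalars and a unitary preserve the frame property, so the rotated statement reduces to the case $\zeta=1$.

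The only point needing care is this branch convention in the rotated case. One must check that the functional calculus intertwines correctly, which follows because $R_\zeta$ is unitary and $\Log_\zeta(\zeta w)=\log\zeta+\Log w$ near $w=1$. Otherwise the proof is pure bookkeeping of earlier results.
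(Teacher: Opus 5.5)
Your proposal is correct and matches the paper's proof. The paper likewise chains \cref{prop:kummer-orbit}, \cref{thm:transmutation}, \cref{thm:bessel-wave} and \cref{lem:thickness-equivalence}. It then handles the rotated atom through the unitary $f\mapsto f(\overline\zeta z)$, which intertwines $T_{a,\zeta}$ with $\zeta T_a$, so that $T_{a,\zeta}^{s}$ is a unimodular multiple of the conjugated $T_a^{s}$.
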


\begin{proof}
The unitary model in \cref{prop:kummer-orbit} identifies the fractional orbit
with $\{\psi_{s_k}\}$.  The invertible transmutation in
\cref{thm:transmutation} identifies this family with Bessel waves
$\{u_{\omega(s_k)}\}$.  Apply \cref{thm:bessel-wave} and then
\cref{lem:thickness-equivalence}.

For $\zeta\in\Torus$, the unitary
\(
 (U_\zeta f)(z)=f(\overline\zeta z)
\)
maps $K_a$ onto $K_{\theta_{a,\zeta}}$, sends $k_0$ to the corresponding
kernel at zero, and satisfies
\(
 T_{a,\zeta}U_\zeta=\zeta U_\zeta T_a.
\)
If $\Log_\zeta$ is the chosen logarithm near $\zeta$, then near $w=1$,
\[
 \Log_\zeta(\zeta w)=c_\zeta+\Log w,
 \qquad e^{c_\zeta}=\zeta,\quad \Re c_\zeta=0.
\]
The holomorphic functional calculus therefore gives
\(
 T_{a,\zeta}^{\,s}U_\zeta
 =e^{sc_\zeta}U_\zeta T_a^s.
\)
The scalar $e^{sc_\zeta}$ is unimodular, so neither the frame property nor
its bounds change under rotation.
\end{proof}

Two consequences are immediate.  For a fixed temporal multisequence
satisfying the multiplicity assumption, the frame property for one atomic
mass $a>0$ is equivalent to the frame property for every $a>0$, since
the geometric criterion does not involve $a$; the frame bounds may
depend on $a$.  Also, every finite deletion from a fractional sampled
frame preserves the frame property because such a deletion leaves
all sufficiently distant radial windows unchanged.

Restricting the temporal multisequence to integer times recovers the
mesoscopic criterion and also restores the exact Hardy-coordinate
information from Proposition~2.1.

\begin{corollary}
\label{cor:integer}
For $\Gamma\subset\Nzero$,
\[
 \{P_{K_a}z^n:n\in\Gamma\}\text{ is a frame}
 \quad\Longleftrightarrow\quad
 \Gamma\text{ is }\sqrt N\text{-thick}.
\]
The optimal lower frame bound is \eqref{eq:optimal-angle}, the upper frame
bound is at most $1$, and the coefficient-interpolation criterion is given in
\cref{prop:exact-angle}.
\end{corollary}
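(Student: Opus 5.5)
The plan is to specialize \cref{thm:main} to integer samples and then import the exact Hilbert-space identities of \cref{prop:exact-angle}.

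Given $\Gamma\subset\Nzero$, consider the temporal multisequence $\mathsf S=(n)_{n\in\Gamma}$. Each unit cell $[n,n+1)$ contains at most one sample, so $M_1(\mathsf S)\le1<\infty$ and \cref{thm:main} applies. For this multisequence the occupied-cell set is $\Gamma_{\mathsf S}=\{\lfloor n\rfloor:n\in\Gamma\}=\Gamma$. Since $s=n$ is an integer, the holomorphic functional calculus agrees with ordinary operator powers: $\exp(n\Log T_a)=T_a^n$. Hence, by the remark at the start of Section~2, $T_a^nk_0=P_{K_a}z^n$. The equivalence of (i) and (iv) in \cref{thm:main} then gives exactly
\[
 \{P_{K_a}z^n:n\in\Gamma\}\text{ is a frame}\iff\Gamma\text{ is }\sqrt N\text{-thick}.
\]

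For the bounds, the key input is the Parseval identity \eqref{eq:canonical-parseval}. It shows that for $f\in K_a$,
\[
 \sum_{n\in\Gamma}\abs{\ip f{P_{K_a}z^n}}^2=\norm f^2-\norm{Q_\Lambda f}^2\le\norm f^2,
\]
so the upper frame bound is at most $1$. The same identity shows that the optimal lower bound is $\inf_{\norm f=1}\bigl(1-\norm{Q_\Lambda f}^2\bigr)=1-\norm{Q_\Lambda P_{K_a}}^2$, which is \eqref{eq:optimal-angle}.

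Since $\theta_a$ is a non-constant inner function, \cref{prop:exact-angle} applies with $\theta=\theta_a$. Its equivalence (i)$\Leftrightarrow$(iv) supplies the coefficient-interpolation criterion: $Q_\Lambda:\theta_aH^2\to H^2_\Lambda$ is onto.

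There is essentially no obstacle here. The only point to check is that the fractional-power definition coincides with integer powers, and this holds because the exponential of the holomorphic logarithm satisfies $\exp(n\Log w)=w^n$ near $1$.
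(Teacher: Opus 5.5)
Your proposal is correct and follows the paper's own route: you apply \cref{thm:main} to the integer multisequence $\mathsf S=\Gamma$, for which $M_1(\mathsf S)\le1$ and $\Gamma_{\mathsf S}=\Gamma$, and then read off the bounds and the interpolation criterion from \cref{prop:exact-angle} and \eqref{eq:canonical-parseval}. Your extra checks, that $\exp(n\Log T_a)=T_a^n$ and $T_a^nk_0=P_{K_a}z^n$, are exactly the details the paper leaves implicit.
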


\begin{proof}
Apply \cref{thm:main} to the integer temporal set $\mathsf S=\Gamma$ and
then use \cref{prop:exact-angle}.
\end{proof}

The bounded displacement in the radial variable preserves the criterion.
In temporal coordinates, this permits perturbations on the mesoscopic scale.

\begin{corollary}\label{cor:jitter}
Let $\mathsf S=(s_k)_{k\in I}$ and $\mathsf S'=(t_k)_{k\in I}$ be
temporal multisequences in $[0,\infty)$, each with bounded unit-cell
multiplicity.  If
\begin{equation}\label{eq:radial-jitter}
 \sup_{k\in I}|\omega(t_k)-\omega(s_k)|<\infty,
\end{equation}
then $\{T_a^{s_k}k_0\}_{k\in I}$ is a frame if and only if
$\{T_a^{t_k}k_0\}_{k\in I}$ is a frame.  In particular, the conclusion
holds if, for some $D\ge0$,
\begin{equation}\label{eq:mesoscopic-jitter}
 |t_k-s_k|\le D\sqrt{1+s_k},\qquad k\in I.
\end{equation}
\end{corollary}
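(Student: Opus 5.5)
By \cref{thm:main}, both frame properties are equivalent to Kummer thickness of the corresponding multisequence. It therefore suffices to show that Kummer thickness of $\mathsf S$ implies Kummer thickness of $\mathsf S'$; the converse follows by symmetry, since \eqref{eq:radial-jitter} is symmetric in $\mathsf S$ and $\mathsf S'$.

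Let $D_0=\sup_k|\omega(t_k)-\omega(s_k)|$, and suppose \eqref{eq:weighted-thickness-direct} holds for $\mathsf S$ with constants $L,\eta,R_0$. Set $L'=L+D_0$. If $|\omega(s_k)-R|\le L$, then $|\omega(t_k)-R|\le L'$, so every index counted in the $\mathsf S$-window at $R$ is also counted in the enlarged $\mathsf S'$-window. We need to compare the weights. For $R\ge R_0'$ large, with $R_0'>2L'$, both $\omega(s_k)$ and $\omega(t_k)$ lie in $[R-L',R+L']$, hence
\[
 \frac1{\omega(t_k)}\ge\frac{R-L'}{R+L'}\cdot\frac1{\omega(s_k)}\ge\frac13\cdot\frac1{\omega(s_k)}.
\]
Summing over these indices gives $\nu_{\mathsf S'}([R-L',R+L'])\ge\eta/3$ for $R\ge R_0'$. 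This is Kummer thickness for $\mathsf S'$. Multiplicities are consistent because the index set $I$ is shared, so each $k$ is counted exactly once on both sides. The bounded unit-cell multiplicity of $\mathsf S'$ is a hypothesis, and \cref{thm:main} applies to it.

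For the temporal version, we check that \eqref{eq:mesoscopic-jitter} implies \eqref{eq:radial-jitter}. The identity
\[
 |\omega(t)-\omega(s)|=\frac{8|t-s|}{\omega(t)+\omega(s)}\le\frac{8D\sqrt{1+s}}{\sqrt{8s+4}}
\]
holds because $\omega(t)\ge0$. The right-hand side is bounded uniformly in $s\ge0$, since $\sqrt{8s+4}\ge2\sqrt{1+s}$. The resulting bound is at most $4D$, which gives \eqref{eq:radial-jitter}.

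No real obstacle arises. The only points needing care are the weight comparison, which requires restricting to large $R$ so that the ratio $\omega(s_k)/\omega(t_k)$ stays near $1$, and noting that the mesoscopic bound uses $\omega(s)\gtrsim\sqrt{1+s}$ alone, without needing any lower bound on $t_k$.
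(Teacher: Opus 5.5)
Your proposal is correct and follows the paper's proof essentially step for step. You enlarge the radial window by the displacement bound, compare the weights, use symmetry together with \cref{thm:main}, and bound $|\omega(t_k)-\omega(s_k)|\le 4D$ via the same difference-quotient identity. The only cosmetic difference is the weight constant: you get $1/3$ from requiring $R>2L'$, while the paper gets $1/2$ from $\omega(t_k)\le\omega(s_k)+d\le2\omega(s_k)$.
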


\begin{proof}
Let $d\ge0$ bound the displacements in \eqref{eq:radial-jitter}.
Suppose $\mathsf S$ satisfies \eqref{eq:weighted-thickness-direct}
with constants $L,\eta,R_0$.  If $R\ge R_0$ and $R-L\ge d$, then
each index with $|\omega(s_k)-R|\le L$ satisfies
\[
 |\omega(t_k)-R|\le L+d,
 \qquad \omega(t_k)\le\omega(s_k)+d\le2\omega(s_k).
\]
Hence
\[
 \sum_{|\omega(t_k)-R|\le L+d}\frac1{\omega(t_k)}
 \ge\frac12\sum_{|\omega(s_k)-R|\le L}\frac1{\omega(s_k)}
 \ge\frac\eta2.
\]
Thus, $\mathsf S'$ is Kummer-thick.  Interchanging the two
multisequences proves the converse, and \cref{thm:main} gives the
frame equivalence.  Finally, \eqref{eq:mesoscopic-jitter} implies
\[
 |\omega(t_k)-\omega(s_k)|
 =\frac{8|t_k-s_k|}{\omega(t_k)+\omega(s_k)}
 \le\frac{8D\sqrt{1+s_k}}{\sqrt{8s_k+4}}\le4D,
\]
which proves the last assertion.
\end{proof}

The unit-cell multiplicity assumption on the perturbed multisequence is
essential to this formulation: a mesoscopic displacement bound alone can
allow increasingly many samples to coalesce in one unit cell.
The following special cases require no additional multiplicity check.

\begin{corollary}
\label{cor:affine-jitter}
Let $\Gamma\subset\Nzero$.
\begin{enumerate}[label=\textup{(\alph*)}]
\item For arbitrary $\alpha_n\in[0,1)$,
\[
 \{T_a^{n+\alpha_n}k_0:n\in\Gamma\}\text{ is a frame}
 \quad\Longleftrightarrow\quad
 \Gamma\text{ is }\sqrt N\text{-thick}.
\]
\item Fix $r>0$ and a bounded real sequence $(\delta_n)_{n\in\Gamma}$ such
that $rn+\delta_n\ge0$.  Then
\[
 \{T_a^{rn+\delta_n}k_0:n\in\Gamma\}\text{ is a frame}
 \quad\Longleftrightarrow\quad
 \Gamma\text{ is }\sqrt N\text{-thick}.
\]
\end{enumerate}
\end{corollary}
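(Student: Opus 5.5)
Both parts reduce to \cref{thm:main}. For each sampled family we only need two things: it has bounded unit-cell multiplicity, and its Kummer thickness is equivalent to $\sqrt N$-thickness of $\Gamma$. \cref{cor:jitter} will transfer the thickness where convenient.

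\emph{Part (a).} Put $s_n=n+\alpha_n$ for $n\in\Gamma$. Since $\alpha_n\in[0,1)$, we have $\lfloor s_n\rfloor=n$. Distinct $n$ give distinct cells, so $M_1(\mathsf S)\le1$ and $\Gamma_{\mathsf S}=\Gamma$. \cref{thm:main}(i)$\Leftrightarrow$(iv) then gives the claim directly; no perturbation argument is needed.

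\emph{Part (b).} Put $t_n=rn+\delta_n$ and let $|\delta_n|\le\Delta$.

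First, multiplicity. If $t_n,t_m\in[j,j+1)$, then $r|n-m|\le1+2\Delta$. Hence each unit cell contains at most $1+(1+2\Delta)/r$ samples, and $M_1<\infty$.

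Second, it suffices to compare with $\mathsf S_0=(rn)_{n\in\Gamma}$, which also has bounded multiplicity. We have $|t_n-rn|\le\Delta\le\Delta\sqrt{1+rn}$, so \eqref{eq:mesoscopic-jitter} holds. By \cref{cor:jitter}, $\{T_a^{t_n}k_0\}$ is a frame iff $\{T_a^{rn}k_0\}$ is. The hypothesis $rn+\delta_n\ge0$ only ensures the times lie in $[0,\infty)$.

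Third, we show that $\mathsf S_0$ satisfies \eqref{eq:direct-temporal-count} iff $\Gamma$ is $\sqrt N$-thick, and then apply \cref{thm:main}(i)$\Leftrightarrow$(iii). The key identity is
\[
\#\{n\in\Gamma: rn\in[N-C\sqrt N,N+C\sqrt N]\}=\#\bigl(\Gamma\cap[N'-C'\sqrt{N'},N'+C'\sqrt{N'}]\bigr),
\]
where $N'=N/r$ and $C'=C/\sqrt r$, since $C\sqrt N/r=C'\sqrt{N'}$.
\begin{itemize}
\item If $\Gamma$ is $\sqrt N$-thick with constants $C',c$, then $\mathsf S_0$ satisfies the temporal count with $C=C'\sqrt r$ and constant $c\sqrt{N'}=(c/\sqrt r)\sqrt N$.
\item Conversely, if $\mathsf S_0$ satisfies the count with constants $C,c$, choose $N=rN'$. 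This gives $\sqrt{N'}$-thickness of $\Gamma$ with constants $C/\sqrt r$ and $c\sqrt r$.
\end{itemize}
Thresholds $N_0$ rescale in the same way.

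The main obstacle is only bookkeeping: checking that every constant rescales by a power of $r$ and that the multiplicity bound holds, as required for \cref{cor:jitter}.
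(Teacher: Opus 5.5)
Your proof is correct and follows the paper's route. Part (a) is identical to the paper's argument. In (b) you check bounded unit-cell multiplicity and show that the temporal count \eqref{eq:direct-temporal-count} is invariant under $n\mapsto rn+\delta_n$, then apply \cref{thm:main}. The only differences are that you remove the bounded shift $\delta_n$ through \cref{cor:jitter} rather than by enlarging the window directly, and you write out the rescaling of the constants by powers of $r$, which the paper leaves implicit.
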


\begin{proof}
In (a), the occupied cells are exactly $\Gamma$.  In (b), the direct
temporal-count formulation in \cref{lem:thickness-equivalence} is invariant,
up to fixed changes of constants, under $n\mapsto rn+\delta_n$. The boundedness
of $(\delta_n)$ and $r>0$ also gives a uniformly bounded unit-cell
multiplicity.  Apply \cref{thm:main}.
\end{proof}

The next example emphasizes that the global natural density does not detect the
mesoscopic gaps relevant to the theorem.

\begin{example}
Set $N_j=2^{4j}$ for $j\ge1$ and retain the set
\[
 \Gamma=\Nzero\setminus
 \bigcup_{j\ge1}
 \bigl([N_j-N_j^{3/4},N_j+N_j^{3/4}]\cap\Nzero\bigr).
\]
Since $N_j^{3/4}=2^{3j}$, summing this geometric sequence shows that
the number of integers erased in $[0,X]$ is $O(X^{3/4})$.
Indeed, a contributing interval has $N_j\le2X$, because
$N_j^{3/4}\le N_j/2$. Thus, $\Gamma$ has natural density one.
However, for any fixed $C>0$, the window
$[N_j-C\sqrt{N_j},N_j+C\sqrt{N_j}]$ is completely erased for all
sufficiently large $j$, since $N_j^{3/4}/\sqrt{N_j}=2^j\to\infty$.
Consequently, the corresponding integer subsystem and every subsystem
obtained by offsets inside the retained unit cells fail to be frames.
Periodic retained sets of positive density satisfy the mesoscopic
condition, as explained in Section~5.
\end{example}

\section{Other generators}
The preceding results were formulated for the canonical generator $k_0$.
We conclude by showing that the characterization is independent of this
choice among all vectors whose integer orbit under $T_a$ is a frame.
The characterization of single-operator frame generators in
\cite{CHP20} provides the required operator.  Write
\[
 \{T_a\}'=\{X\in\mathcal B(K_a):XT_a=T_aX\}
\]
for the commutant of $T_a$, and let $GL(K_a)$ denote the group of
boundedly invertible operators on $K_a$.

\begin{proposition}
\label{prop:generator-independence}
Let $f\in K_a$ and suppose that $\{T_a^nf:n\ge0\}$ is a frame for $K_a$.  Then
there is an invertible operator $X$ that commutes with $T_a$ such that $f=Xk_0$.
Consequently, for every temporal multisequence covered by
\cref{thm:main},
\[
 \{T_a^{s_k}f\}_{k\in I}\text{ is a frame}
 \quad\Longleftrightarrow\quad
 \Gamma_{\mathsf S}\text{ is }\sqrt N\text{-thick}.
\]
\end{proposition}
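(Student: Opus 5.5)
Since $\{T_a^nk_0\}_{n\ge0}=\{P_{K_a}z^n\}$ is a Parseval frame by \eqref{eq:canonical-parseval}, its synthesis operator $\Phi_0:\ell^2(\Nzero)\to K_a$, $\Phi_0(c)=\sum c_nT_a^nk_0$, is a co-isometry. The same holds for $\Phi_f(c)=\sum c_nT_a^nf$, which is bounded and onto because $\{T_a^nf\}$ is a frame. The first step is to construct $X$ in the spirit of \cite{CHP20}. Both kernels $\Ker\Phi_0$ and $\Ker\Phi_f$ are invariant under the backward-shift-type intertwining. Concretely, if $S$ is the unilateral shift on $\ell^2(\Nzero)$, then $\Phi_0 S=T_a\Phi_0$ and $\Phi_fS=T_a\Phi_f$, so both kernels are $S$-invariant. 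Under the coefficient identification $\ell^2(\Nzero)\simeq H^2$, one has $\Phi_0=P_{K_a}$ and $\Ker\Phi_0=\theta_aH^2$. By Beurling's theorem, $\Ker\Phi_f=\phi H^2$ for an inner $\phi$ (the kernel is nonzero, otherwise $K_a$ would be isomorphic to $H^2$ with $T_a$ similar to the shift, which is impossible since $\sigma(T_a)=\{1\}$). Then $T_a$ is similar to the compression $S_\phi$ via the induced isomorphism $\widetilde\Phi_f:K_\phi\to K_a$, which sends $P_{K_\phi}z^n$ to $T_a^nf$.

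The main obstacle is to show that $\phi$ and $\theta_a$ agree up to a unimodular constant, so that the composed isomorphism $X=\widetilde\Phi_f\circ(\text{identity }K_{\theta_a}\to K_\phi)$ makes sense and satisfies $Xk_0=f$. I would argue this via quasi-similarity invariants. Similar $C_0$ contractions have the same minimal function, and the minimal function of $S_\phi$ is $\phi$. The operator $S_\phi$ is similar to $T_a=S_{\theta_a}$, and $\theta_a(T_a)=0$ while $\phi(S_\phi)=0$. Hence, transported through the similarity, $\theta_a(S_\phi)=0$ and $\phi(T_a)=0$, giving $\phi\mid\theta_a$ and $\theta_a\mid\phi$, so $\phi=c\,\theta_a$ with $|c|=1$. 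The $H^\infty$ calculus is compatible with similarity, since it is determined by the polynomial calculus together with weak-star limits, and similarities preserve bounded pointwise-weak-star convergence of polynomial approximants. Then $\Ker\Phi_f=\Ker\Phi_0$, and $\Phi_f$ factors as $\Phi_f=X\Phi_0$ with $X$ bounded and bijective from $K_a$ onto $K_a$. Bijectivity comes from the open mapping theorem, since $\Phi_f$ is onto and the two kernels coincide. Applying $X\Phi_0S=\Phi_fS=T_a\Phi_f=T_aX\Phi_0$ and using the surjectivity of $\Phi_0$ gives $XT_a=T_aX$. Evaluating at $e_0$ gives $Xk_0=f$.

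Finally, commutation with $T_a$ passes to the holomorphic functional calculus: $X$ commutes with every resolvent of $T_a$ and hence with $T_a^s=\exp(s\Log T_a)$. Therefore $T_a^{s_k}f=XT_a^{s_k}k_0$ for all $k$. Since $X\in GL(K_a)$, the family $\{T_a^{s_k}f\}$ is a frame exactly when $\{T_a^{s_k}k_0\}$ is. Applying \cref{thm:main}, (i)$\Leftrightarrow$(iv), yields the stated equivalence with $\sqrt N$-thickness of $\Gamma_{\mathsf S}$.
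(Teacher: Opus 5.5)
Your proof is correct. Its second half is exactly the paper's: $X$ commutes with the Riesz--Dunford calculus, the frame property is invariant under $X\in GL(K_a)$, and you then appeal to \cref{thm:main}, (i)$\Leftrightarrow$(iv).

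The two arguments differ in how $X$ is obtained. The paper simply cites \cite[Proposition~3.8]{CHP20}: any two frame-generating vectors of a fixed bounded operator are related by an invertible element of its commutant. You instead reprove this fact for $T_a$ from scratch:
\begin{enumerate}[label=\textup{(\arabic*)}]
\item the synthesis operators intertwine the unilateral shift with $T_a$;
\item Beurling's theorem gives $\Ker\Phi_f=\phi H^2$, which is nonzero because $\sigma(T_a)=\{1\}\neq\overline{\D}$;
\item the minimal-function argument for the similar $C_0$ contractions $T_a$ and $S_\phi$ forces $\phi=c\,\theta_a$;
\item the equality of kernels yields the factorization $\Phi_f=X\Phi_0$ with $X=\Phi_f|_{K_a}$.
\end{enumerate}

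The transport of the Sz.-Nagy--Foias calculus through the similarity, which you only sketch, is sound. Both operators are c.n.u., so $u(T)=\lim_{r\to1^-}u_r(T)$ strongly. Moreover, $u_r(T)$ is a norm-convergent power series and therefore commutes with conjugation by the similarity.

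What your route buys is a self-contained argument and an explicit formula $X(P_{K_a}h)=\Phi_f(h)$. It costs length and the use of Beurling's theorem and the $H^\infty$ calculus. The paper's citation is shorter and applies verbatim to any operator with a frame orbit.

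Two harmless wording slips should be fixed. First, $\Phi_f$ is bounded and onto, but it is not a co-isometry in general. Second, the kernels are invariant under the forward shift $S$, not under a ``backward-shift-type'' map.
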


\begin{proof}
The canonical orbit $\{T_a^nk_0\}_{n\geq0}$ is a Parseval frame, while
$\{T_a^nf\}_{n\geq0}$ is a frame by assumption.  By
\cite[Proposition~3.8]{CHP20}, all frame generators for a fixed bounded
operator are equivalent through its commutant. Hence, there exists
\(
   X\in\{T_a\}'\cap GL(K_a)
\)
such that
\(
   f=Xk_0.
\)
The same characterization is recorded in
\cite[Theorem~2.1]{BHKLL26}.

Since $X$ commutes with $T_a$, it commutes with the holomorphic functional
calculus of $T_a$, and therefore
\(
   XT_a^s=T_a^sX\),
   \(s\in\mathbb R.
\)
Consequently,
\(
   T_a^{s_k}f
   =
   XT_a^{s_k}k_0\),
   \(k\in I.
\)
Thus, $\{T_a^{s_k}f\}_{k\in I}$ is a frame if and only if the canonical sampled family $\{T_a^{s_k}k_0\}_{k\in I}$ is a frame.  The conclusion now
follows from Theorem~\ref{thm:main}.
\end{proof}

\begin{remark}
For the corresponding explicit $H^\infty$ parametrization of all frame
generators, including the corona condition for invertibility, see
\cite[Remark~3.9]{CHP20}.
\end{remark}

\section{Conclusion}
The exact Kummer--Bessel reduction identifies the temporal sampling
geometry of the atomic singular model.  Under bounded unit-cell
multiplicity, the weighted radial thickness characterizes the frame subsystems,
is independent of the atomic mass, and is stable under bounded radial
displacements.  The integer specialization also provides exact angle
and coefficient-interpolation criteria, while the commutant
characterization extends the result to every integer-orbit frame generator.

\section*{Declarations}

\subsection*{Funding}
Ilya Krishtal was supported in part by the Fulbright Global Scholar Award.
Javad Mashreghi was supported by the Canada Research Chairs Program
(CRC-2022-00097) and the Natural Sciences and Engineering Research Council
of Canada (Discovery Grant RGPIN-2024-04232).




\Needspace{10\baselineskip}
\subsection*{Use of generative AI}
During the preparation of this work, the authors used OpenAI's ChatGPT to
assist in manuscript organization, \LaTeX{} drafting and figures,
development and checking of mathematical arguments, and bibliography
checking and formatting. The authors reviewed and edited the AI-assisted
output and take full responsibility for the mathematical arguments,
references, and all other content of the article.

\end{document}